\title[Cohomology of representation varieties]{The algebraic de Rham cohomology of representation varieties}
\subjclass[2000]{13D03, 14F40, 14H10, 14L24, 14Q10, 14R20}
\author{Eugene Z. Xia}
\address{Department of Mathematics, National Cheng Kung University
and National Center for Theoretical Sciences, Tainan 70101}
\email{ezxia@ncku.edu.tw}
\thanks{The author gratefully acknowledges partial support by the Ministry of Science and Technology, Taiwan with grants.}
\renewcommand{\b}{\bullet}
\renewcommand{\d}{\partial}
\begin{document}

\begin{abstract}
The $\SL(2,\C)$-representation varieties of punctured surfaces form natural families parameterized by holonomies at the punctures.  In this paper, we first compute the loci where these varieties are singular for the cases of one-holed and two-holed tori and the four-holed sphere.  We then compute the de Rham cohomologies of these varieties of the one-holed torus and the four-holed sphere when the varieties are smooth via the Grothendieck theorem.  Furthermore, we produce the explicit Gau\ss-Manin connection on the natural family of the smooth $\SL(2,\C)$-representation variety of the one-holed torus.
\end{abstract}

\maketitle
\section{Prelude}

Let $\Sigma = \Sigma_{g,m}$ be a compact oriented surface of genus $g$
with $m$ punctures
$\CC = \{\CC_1,...,\CC_m\}.$
Denote by $\pi = \pi_1(\Sigma)$ its fundamental group.  Let \(G\) be a reductive complex algebraic group and $\Hom(\pi, G)$ the space of homomorphisms (representations) from $\pi$ to $G$.  $\Hom(\pi, G)$ inherits a variety structure from $G$ and
$G$ acts on $\Hom(\pi,G)$ by equivalence of representations (conjugation)
$$
G \times \Hom(\pi,G) \lto \Hom(\pi,G), \ \ \  (g,\rho) \mapsto g \rho g^{-1}.
$$
Denote by
$$
\M(G) = \Hom(\pi,G)/G
$$
the categorical quotient of equivalent representations.
Fix a conjugacy class $C_i \subset G$ for each puncture
$\CC_i$ and let $C = \{C_1, \cdots, C_m\}$. Let
$$
\Hom_C(\pi, G) = \{\rho \in \Hom(\pi, G): \rho(\CC_i) \in
C_i, \ \  \mbox{for} \ \  1 \le i \le m \}.
$$
The $G$-action preserves $\Hom_C(\pi,G)$ and the
representation variety is the categorical quotient
$$
\M_C(G) = \Hom_C(\pi, G)/G.
$$

The representation variety $\M_C(G)$ is of great interest because it is the (coarse) moduli space of integrable $G$-connections on $\Sigma_{g,m}$ \cite{De1, DK1}.     Fix $G = \SL(2,\C)$ and let
$$\M = \M(\SL(2,\C)), \ \ \ \M_C = \M_C(\SL(2,\C)).$$
As the conjugacy classes in $C$ vary in $G$, the moduli spaces $\M_C$ vary.  That is
the varieties $\M_C$ form a family parameterized by $C$.
In this paper, we first identify the singular loci for the cases of $\Sigma_{1,1}, \Sigma_{1,2}$ and $\Sigma_{0,4}$.

In the cases of $\Sigma_{1,1}$ and $\Sigma_{0,4}$, the $\M_C$'s are 2-dimensional.  In these two cases, their homologies have been calculated via topological methods \cite{GN1}.
A remarkable theorem of Grothendieck \cite{Gr0} states that the hypercohomology of the algebraic de Rham complex of a smooth variety computes its smooth de Rham cohomology.  This provides an algebraic method for computing $\H_{dR}^*(\M_C)$.
We then carry out the computations for the representation varieties for these two cases.  
There are pure algorithmic approaches to these problems; however, these general methods tend to overwhelm computers \cite{OT1, Sc1, Wa1}.  We compute our results directly while taking advantages of the computer resources available, especially {\em Macaulay2} \cite{GS1}.

These families have natural integrable connections, the Gau\ss-Manin connections \cite{KO1}.  We compute this connection explicitly for the family of representation varieties of $\Sigma_{1,1}$.
\vskip 0.2in
\centerline{Acknowledgement}

Many results contained in this paper are obtained with the aid of {\em Mathematica} \cite{Wr1}, {\em Singular} \cite{DGPS} and especially {\em Macaulay2} \cite{GS1}.  The author also benefited from discussions with William M. Goldman and Jiu-Kang Yu.  The latter also provided the author with a simple yet effective package to compute differential forms.

\section{Generalities}\label{sec:gen}
\subsection{Smooth varieties and their cohomologies}
Let $X$ be a smooth algebraic variety over $\C$ with structure sheaf $\Os_X$.
Denote by $(\Omega_X^\b, d)$ the algebraic de Rham complex of $X$:
\begin{equation}\label{seq:de Rham}
\xymatrix{
(\Omega_X^\b, d): & \Omega_X^0 \ar[r]^{d_0} & \Omega_X^1 \ar[r]^{d_1} & \cdots
}
\end{equation}
A remarkable theorem of Grothendieck states \cite{Gr0}:
\begin{thm}[Grothendieck]\label{thm:Groth}
The (hyper)cohomologies of $(\Omega_X^\b, d)$ coincide with the smooth de Rham cohomologies of $X$.
\end{thm}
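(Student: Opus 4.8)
\medskip
\noindent\textbf{Proof proposal.}
The plan is to compare the algebraic de Rham complex with its analytic avatar and then identify the latter with singular cohomology. Write $X^{\mathrm{an}}$ for the complex analytic space underlying $X$. On the analytic side two classical facts apply: by de Rham's theorem the smooth de Rham cohomology of $X^{\mathrm{an}}$ is $\H^*(X^{\mathrm{an}},\C)$, and by the holomorphic Poincar\'e lemma the complex $(\Om_{X^{\mathrm{an}}}^\b,d)$ is a resolution of the constant sheaf $\C$, so that $\HH^*(X^{\mathrm{an}},\Om_{X^{\mathrm{an}}}^\b)\cong \H^*(X^{\mathrm{an}},\C)$. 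Thus Theorem~\ref{thm:Groth} reduces to showing that the canonical map $\HH^*(X,\Om_X^\b)\to \HH^*(X^{\mathrm{an}},\Om_{X^{\mathrm{an}}}^\b)$ coming from analytification is an isomorphism.

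When $X$ is smooth and projective this is immediate: each $\Om_X^p$ is coherent, so Serre's GAGA theorem identifies $\H^q(X,\Om_X^p)$ with $\H^q(X^{\mathrm{an}},\Om_{X^{\mathrm{an}}}^p)$ compatibly with the two hypercohomology spectral sequences, and the comparison map is an isomorphism on $E_1$ pages, hence on abutments. It is also worth recording the affine case: if $X$ is affine the sheaves $\Om_X^p$ are quasi-coherent and have no higher Zariski cohomology, so $\HH^*(X,\Om_X^\b)$ is simply the cohomology of the complex of global algebraic forms $\Gamma(X,\Om_X^\b)$.

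For a general smooth $X$ the strategy is to trade $X$ for a good compactification. By Hironaka's resolution of singularities, choose an open immersion $j\colon X\hookrightarrow\bar X$ with $\bar X$ smooth and projective and with boundary $D=\bar X\setminus X$ a reduced divisor with normal crossings, and replace the de Rham complex of $X$ by the logarithmic de Rham complex $\Om_{\bar X}^\b(\log D)$, whose terms are coherent on $\bar X$. The argument then rests on two quasi-isomorphism statements. On the algebraic side, $\HH^*(\bar X,\Om_{\bar X}^\b(\log D))\cong \HH^*(X,\Om_X^\b)$: the open immersion $j$ is affine, so $\HH^*(X,\Om_X^\b)=\HH^*(\bar X,Rj_*\Om_X^\b)$ with $Rj_*\Om_X^p=\Om_{\bar X}^p(\ast D)$ (algebraic forms with poles of arbitrary order along $D$), and Deligne's logarithmic comparison lemma identifies $\Om_{\bar X}^\b(\log D)\hookrightarrow\Om_{\bar X}^\b(\ast D)$ as a quasi-isomorphism, the point being that the connection $d$ is regular along $D$. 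On the analytic side, Deligne's logarithmic Poincar\'e lemma says the natural morphism $\Om_{\bar X^{\mathrm{an}}}^\b(\log D)\to Rj^{\mathrm{an}}_*\C$ is a quasi-isomorphism, whence $\HH^*(\bar X^{\mathrm{an}},\Om_{\bar X^{\mathrm{an}}}^\b(\log D))\cong\H^*(X^{\mathrm{an}},\C)$. Since $\bar X$ is projective, GAGA applies to the coherent sheaves $\Om_{\bar X}^p(\log D)$ and matches the two hypercohomologies of the logarithmic complexes; chasing the resulting commutative square delivers $\HH^*(X,\Om_X^\b)\cong\H^*(X^{\mathrm{an}},\C)$, and hence the theorem.

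The main obstacle is precisely this passage to the logarithmic complex on the compactification: one must show that it changes neither the algebraic de Rham cohomology of $X$ (the regularity argument behind the algebraic comparison lemma) nor, analytically, the cohomology of the constant sheaf (Deligne's logarithmic Poincar\'e lemma), and one must invoke Hironaka to produce a normal crossings boundary in the first place. By contrast the projective case, the two Poincar\'e lemmas, and GAGA for a projective variety are standard, so the genuine work is concentrated entirely in the open case.
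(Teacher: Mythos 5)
The paper gives no argument for Theorem~\ref{thm:Groth}: it is quoted verbatim from Grothendieck \cite{Gr0}, so the only comparison available is with the proof in that reference. Your outline is correct and belongs to the same family of arguments, but it is the Deligne-style variant rather than Grothendieck's original one. Both proofs share the analytic input (smooth de Rham theorem plus the holomorphic Poincar\'e lemma), GAGA in the projective case, and the reduction via Hironaka to a smooth projective compactification $\bar X$ with normal crossings boundary $D$; they diverge at the boundary. Grothendieck works directly with the complex $\Omega_{\bar X}^\b(\ast D)$ of forms with arbitrary poles along $D$: its terms are only quasi-coherent, so he exhausts them by coherent subsheaves of bounded pole order (using that $\bar X$ is projective, so cohomology commutes with this colimit and GAGA applies term by term), and the key analytic statement that the meromorphic complex computes $\H^*(X^{\mathrm{an}},\C)$ goes back to Hodge--Atiyah. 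You instead interpose the logarithmic complex $\Omega_{\bar X}^\b(\log D)$, invoking the algebraic quasi-isomorphism $\Omega_{\bar X}^\b(\log D)\hookrightarrow\Omega_{\bar X}^\b(\ast D)$ (regularity along $D$, cf.\ \cite{De1}) and the logarithmic Poincar\'e lemma on the analytic side; the payoff is that the log sheaves are coherent, so GAGA applies without any colimit argument, at the cost of needing the two logarithmic comparison lemmas, which carry essentially all the work. Your parenthetical remark about the affine case is exactly the reduction the paper actually uses later (Corollary~\ref{cor:hyper}). As a sketch I see no gap, only the expected black boxes (Hironaka, GAGA, the two logarithmic lemmas), each of which is standard.
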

\subsection{Relative de Rham complex and cohomologies}
Let $Y \to \Spec(\C)$ be a smooth $\C$-variety and $f: X \to Y$ a smooth $Y$-variety.
Denote by $f^* : \Os_Y \lto \Os_X$ the corresponding morphism between the structure sheaves.  From these come the three de Rham complexes [\S 2.8, \cite{Ha1}]:
\begin{equation}\label{obj:de Rham}
(\Omega_X^\b, d), (\Omega_Y^\b, d), (\Omega_{X/Y}^\b, d).
\end{equation}
Each complex is associated with their respective (hyper-)cohomologies.  The relative de Rham cohomologies associated with $(\Omega_{X/Y}^\b, d)$ are cohomologies of $\Os_Y$-sheaves
$$
\mH^i := \mH^i(X) := R^i f_*(\Omega_{X/Y}^\b).
$$
Let $\Phi : S \lto Y$ be a flat morphism.  Then by base extension, we obtain the $S$-scheme $\Phi^*(X)$ and its associated de Rham complex.
\begin{prop}\label{prop:localization}
$$
\Phi^*(\mH^i) \stackrel{\cong}{\lto} R^i (f \circ \Phi)_*(\Omega_{\Phi^*(X)/S}).
$$
\end{prop}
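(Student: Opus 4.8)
The plan is to reduce the statement to the classical base-change theorem for higher direct images of a flat-over-base sheaf, applied not to a single quasi-coherent sheaf but to the relative de Rham complex $(\Omega_{X/Y}^\b, d)$ viewed as a complex of $\Os_X$-modules whose terms are flat over $Y$. First I would observe that $\Phi : S \to Y$ being flat makes the fibre square
\begin{equation*}
\xymatrix{
\Phi^*(X) \ar[r] \ar[d]_{f\circ\Phi} & X \ar[d]^{f} \\
S \ar[r]^{\Phi} & Y
}
\end{equation*}
a Tor-independent (in fact flat) base change, and that the formation of Kähler differentials commutes with base change: there is a canonical isomorphism $\Omega_{\Phi^*(X)/S}^\b \cong \Phi_X^*(\Omega_{X/Y}^\b)$ of complexes, where $\Phi_X : \Phi^*(X) \to X$ is the projection. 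This is where the smoothness of $f$ enters essentially: it guarantees each $\Omega_{X/Y}^p$ is locally free of finite rank on $X$, hence in particular flat over $Y$, so that no higher Tor terms appear when we pull back.

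Next I would invoke flat base change for higher direct images. Because $\Phi$ is flat and each term $\Omega_{X/Y}^p$ is flat over $Y$ (indeed $\Os_X$-coherent), the natural base-change morphism
\begin{equation*}
\Phi^*\bigl(R^i f_*(\Omega_{X/Y}^\b)\bigr) \lto R^i (f\circ\Phi)_*\bigl(\Phi_X^*(\Omega_{X/Y}^\b)\bigr)
\end{equation*}
is an isomorphism. The cleanest way to see this is to pass to hypercohomology via a spectral-sequence or, better, a way-out / derived-category argument: one takes a bounded-below complex of $f$-acyclic (e.g. affine over $Y$) resolutions term by term, notes that flat pullback preserves both $f_*$-acyclicity and the flatness needed so that $\Phi^*$ is exact on the relevant complexes, and then observes that $\Phi^*$ commutes with taking cohomology sheaves of the pushed-forward complex. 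Combining this isomorphism with the identification $\Phi_X^*(\Omega_{X/Y}^\b) \cong \Omega_{\Phi^*(X)/S}^\b$ from the previous step yields exactly the claimed isomorphism
\begin{equation*}
\Phi^*(\mH^i) \stackrel{\cong}{\lto} R^i (f\circ\Phi)_*\bigl(\Omega_{\Phi^*(X)/S}^\b\bigr).
\end{equation*}

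The main obstacle is purely bookkeeping rather than conceptual: one must be careful that "flat base change" for a \emph{complex} (a hyperderived functor $Rf_*$ of a complex, not an ordinary sheaf) genuinely follows from the single-sheaf statement. I would handle this either by the hypercohomology spectral sequence $E_1^{p,q} = R^q f_*(\Omega_{X/Y}^p) \Rightarrow \mH^{p+q}$ — whose every entry satisfies flat base change by the classical theorem, and to which $\Phi^*$ (exact, since $\Phi$ flat) applies compatibly so that the induced map on abutments is an isomorphism — or, more slickly, by a Čech computation over an affine-over-$Y$ cover, in which case everything is reduced to the statement that flat base change commutes with the totalization of a double complex of flat modules. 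Either route is routine once the flatness of the $\Omega_{X/Y}^p$ (from smoothness of $f$) and the exactness of $\Phi^*$ (from flatness of $\Phi$) are in hand, and I would present the spectral-sequence version as it most transparently isolates the single input — classical flat base change — that is being used.
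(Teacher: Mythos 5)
Your argument is correct in substance, but it is not the paper's argument: the paper proves this proposition by a one-line citation to [Proposition 5.2, \cite{Ha1}], so what you have written is essentially the standard proof of the quoted result rather than a variant of anything in the text. Your route --- identify $\Omega^\bullet_{\Phi^*(X)/S}$ with the pullback of $\Omega^\bullet_{X/Y}$, then transport classical flat base change through the hypercohomology spectral sequence $E_1^{p,q}=R^qf_*(\Omega^p_{X/Y})$, whose differentials are $\Os_Y$-linear even though $d$ is not $\Os_X$-linear, and use boundedness of the filtration (as $\Omega^p_{X/Y}=0$ for $p$ large) to compare abutments --- is sound and correctly isolates the one nontrivial input. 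Two hypotheses you lean on are in fact not needed: formation of K\"ahler differentials commutes with \emph{arbitrary} base change, and flat base change for a single quasi-coherent sheaf requires only flatness of $\Phi$, not $Y$-flatness of the sheaf, so the smoothness of $f$ (though assumed in this subsection and harmless) is not what makes the proposition work. As for what each approach buys: the paper's citation keeps things short and hands off to a reference stated in exactly the generality later used --- localization at a maximal ideal and passage to the residue field, where, since everything in the paper is affine, the claim reduces to the fact that tensoring with a flat algebra commutes with taking cohomology of a complex of modules --- while your argument is self-contained modulo the classical flat base change theorem and makes explicit where flatness of $\Phi$, the $\Os_Y$-linearity of the $E_1$ differentials, and boundedness enter.
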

\begin{proof}
See [Proposition 5.2, \cite{Ha1}] .
\end{proof}
In particular, this is true for localization at a maximal ideal $P \subseteq \Os_Y$, i.e. for $\Phi : Y_P \lto Y$.

Let $\C = k(P)$ be the residue field at the closed point $P$ and
$$\phi : \Spec(k(P)) \lto Y.$$  Then we obtain the $S$-scheme $U = \phi^*(X)$ by base extension and the associated de Rham complex $(\Omega_U^\bullet, d)$.  Denote by $\H^\bullet$ the de Rham cohomologies of $U$.

\subsection{The Gau\ss-Manin connection}
Assume $f$ to be smooth.  Then there is an exact sequence:
\[\xymatrix{
0 \ar[r] & \Os_X \otimes_{f^*} \Omega_Y^1 \ar[r] & \Omega_X^1 \ar[r] & \Omega_{X/Y}^1 \ar[r] & 0.
}\]
This gives rise to a filtration $F$ on $\Omega_X^\bullet$
$$
F^i = \im(\Omega_X^{\bullet - i} \otimes_{f^*} \Omega_Y^i \stackrel{\wedge}{\lto} \Omega_X^i).
$$
The resulting spectral sequence has its $(E_1, d_1)$ pair such that \cite{KO1}
$$
E_1^{p,q} \cong \Omega_Y^p \otimes_{f^*} \mH^q.
$$
The Gau\ss-Manin connection on $\mH^q$ is the differential $\nb = d_1^{0,q}$ in the following complex \cite{KO1}
\[\xymatrix{
0 \ar[r] & \mH^q \ar[r]^{d_1^{0,q}} & \Omega_Y^1 \otimes_{f^*} \mH^q \ar[r]^{d_1^{1,q}} & \Omega_Y^2 \otimes_{f^*} \mH^q \ar[r] & \cdots.
}\]

\section{Singular and smooth varieties}
For the rest of the paper, unless otherwise specified, we assume all varieties are affine over $\C$.

\subsection{Rings, modules and affine varieties}
Denote by $\x$ the set $\{x_1, x_2, \cdots x_n\}$ and $\Os \cong \C[\x]$, the coordinate ring of $\C^n$ and
by $(\Omega^\bullet, d)$ the algebraic de Rham complex over $\Spec(\C[\x])$.
Let $a \in \Z_{\ge 0}^n$.  We will use the standard notations
$$
|a| := \sum_{i=1}^n a_i, \ \ \ \ \x^a := \prod_{i=1}^n x_i^{a_i}.
$$
Let $N = \{j : 1 \le j \le n\}$ be the ordered index set of $n$ elements.  For an ordered subset $K \subseteq N$, write $dx_K$ for $\wedge_{j \in K} dx_j$.
Then $\Omega^i$ is generated as an $\Os$-module by $\{dx_K : |K| = i\}.$
\begin{defin}
Let $w \in \Omega^i$.
\begin{enumerate}
\item Denote by $\d_j$ the differential operator $\frac{\d}{\d x_j}$.
\item $w$ is a monomial form of degree $|a|$ if $w = \x^a dx_K$.
\item $\deg(w)$ denotes the maximum degree of the monomial forms in $w$.
\end{enumerate}
\end{defin}
Let
$$I = (\phi_i : 1 \le i \le k) \subseteq \Os$$
be the (finitely generated) ideal of definition of $U$, which is to say that  $U = \Spec(\Os_U)$ with
\begin{equation}\label{seq:definition}
\xymatrix{
0 \ar[r] & I \ar[r] & \Os \ar[r] & \Os_U \ar[r] & 0.
}
\end{equation}
This induces an inclusion $\iota : U \to \C^n$.
\begin{rem}
For any module $M$, we shall always use elements of $M$ to denote elements of quotients of $M$ when contexts are clear.
\end{rem}
\begin{rem}\label{rem:duality}
The map
$$
P : \Omega^0 \lto \Omega^n, \ \ \ f \mapsto f dx_N
$$
is an $\Os$-module isomorphism.
\end{rem}

\subsection{Gr\"obner basis and singularity}
Let $F$ be a free $\Os$-module and $W$ a complete order on the monomials of $F$ [\S 15, \cite{Ei1}].  For any $v, w \in F$, we write $W(v) > W(w)$ if the leading monomial of $v$ is greater than the leading monomial of $w$ according to the order $W$.
A monomial order corresponds to a weight matrix and we do not distinguish the two [\S 2.4, \cite{CLO1}].
\begin{defin} [\S 16, \cite{Ei1}]
Let $\JJ \subseteq \Os$ be the ideal generated by the $c \times c$ minors of the Jacobian $[\d_j \phi_i]$, where $1 \le j \le n$ and $c$ is the codimension of $I$ ($U)$.  The Jacobian ideal of $I$ is
$$
J(I) :=  I + \JJ \subseteq \Os.
$$
\end{defin}
\begin{prop}[\S 16, \cite{Ei1}] \label{prop:singular}
$U$ is smooth if and only if $J(I) = \Os$.
\end{prop}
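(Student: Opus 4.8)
The plan is to pass from the ideal $J(I)$ to the geometry of $U$ via the Nullstellensatz, and then to recognize the resulting pointwise condition as the Jacobian criterion for regularity of a local ring. Since $\Os_U$ is a finitely generated $\C$-algebra and $\C$ is algebraically closed of characteristic zero, $U$ is smooth exactly when the local ring $\Os_{U,p}$ is regular at every closed point $p \in U$; so it suffices to characterize the condition $J(I) = \Os$ in those local terms.

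First I would handle the ideal-theoretic side. By the Nullstellensatz, $J(I) = \Os$ if and only if $V(J(I)) = \emptyset$. Since $J(I) = I + \JJ$, the variety $V(J(I)) = V(I) \cap V(\JJ)$ consists precisely of those points of $U$ at which every $c \times c$ minor of $[\d_j\phi_i]$ vanishes, that is, the points of $U$ where $\rk[\d_j\phi_i] < c$. Thus $J(I) = \Os$ is equivalent to the statement that the Jacobian matrix $[\d_j\phi_i]$ has rank at least $c$ at every closed point of $U$.

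The crux is the local computation at a fixed closed point $p \in U$, with maximal ideal $\m_p \subset \Os_{U,p}$ and residue field $\C$. Here I would use the conormal exact sequence for $\C \to \Os \to \Os_U$, base-changed to the residue field $k(p) = \C$:
\[
(I/I^2)\otimes_{\Os_U}\C \stackrel{\bar{J}}{\longrightarrow} \Omega^1_{\Os/\C}\otimes_{\Os}\C \longrightarrow \Omega^1_{\Os_U/\C}\otimes_{\Os_U}\C \longrightarrow 0 ,
\]
in which, with respect to the generators $\phi_1,\dots,\phi_k$ of the left term and $dx_1,\dots,dx_n$ of the middle term, $\bar{J}$ is exactly the Jacobian matrix evaluated at $p$. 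Combining exactness with the standard identification $\Omega^1_{\Os_U/\C}\otimes_{\Os_U}\C \cong \m_p/\m_p^2$ valid at a closed point of a $\C$-variety [\S 2.8, \cite{Ha1}] gives $\dim_\C \m_p/\m_p^2 = n - \rk[\d_j\phi_i](p)$. On the other hand $\Os_{U,p}$ is Noetherian local, so $\dim_\C \m_p/\m_p^2 \ge \dim \Os_{U,p}$, with equality precisely when $\Os_{U,p}$ is regular; and since $\Os = \C[\x]$ is catenary and $I$ has codimension $c$ (so that $U$ is equidimensional of dimension $n-c$), $\dim \Os_{U,p} = n-c$. Hence $\rk[\d_j\phi_i](p) \le c$ automatically, and $\Os_{U,p}$ is regular if and only if $\rk[\d_j\phi_i](p) = c$, equivalently $\rk[\d_j\phi_i](p) \ge c$.

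Chaining the two parts yields: $U$ is smooth $\iff$ $\Os_{U,p}$ is regular for every closed $p\in U$ $\iff$ $\rk[\d_j\phi_i](p) \ge c$ for every closed $p\in U$ $\iff$ $V(J(I)) = \emptyset$ $\iff$ $J(I) = \Os$. The step I expect to be the only delicate point is the identity $\dim\Os_{U,p} = n-c$: it uses that $U$ is equidimensional of codimension $c$ — which is the content of the phrase ``$c$ is the codimension of $I$'' in the statement — and it is needed both for the automatic bound $\rk \le c$ and for pinning regularity to $\rk = c$. An alternative, shorter route would be to cite directly the Jacobian criterion for regularity of local rings essentially of finite type over a perfect field [\S 16, \cite{Ei1}] and then to globalize exactly as above via the Nullstellensatz.
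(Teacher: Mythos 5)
The paper gives no proof of this proposition at all --- it is quoted directly from Eisenbud [\S 16, \cite{Ei1}] --- and your argument is exactly the standard proof of that Jacobian criterion: the Nullstellensatz converts $J(I)=\Os$ into the pointwise condition $\rk[\d_j\phi_i](p)\ge c$ on $U$, and the conormal sequence plus the identification of $\Omega^1_{\Os_U/\C}\otimes k(p)$ with $\m_p/\m_p^2$ at a $\C$-point turns that into regularity of $\Os_{U,p}$, so your proposal is correct and in substance the same argument as the cited source. The only delicate point is the one you flag yourself: the equality $\dim\Os_{U,p}=n-c$ (and the automatic bound $\rk\le c$) needs $I$ to be of \emph{pure} codimension $c$, not merely of codimension $c$ --- without equidimensionality the statement can genuinely fail at a singular point of a lower-dimensional component --- but this is harmless for the paper, where the ideals used (principal ideals and complete intersections) are equidimensional.
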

Hence one may determine whether $U$ is smooth by computing a Gr\"obner basis $J_G$ for $J(I)$.

\begin{rem}\label{rem:degree}
Let $w$ be a monomial form.  Then either $dw=0$ or $\deg(w) = \deg(d w) + 1$.
\end{rem}

\begin{defin}\label{def:compatible weight}
A monomial order $W$ on $\Omega^i$ is degree-modified if $\deg(\eta_2) > \deg(\eta_1)$ implies $W(\eta_2) > W(\eta_1)$.

If $W$ is a monomial order on $\Omega^0$, then $W$ induces a monomial order on $\Omega^{n}$ and vice versa, via $P$:
$$W(\x^a) \longleftrightarrow W(P(\x^a)).$$  With respect to $P$, our order will always satisfy: $W(w) > W(v)$ if and only if $W(P(w)) > W(P(v))$ (See Remark~\ref{rem:duality}).
\end{defin}

\section{Computing algebraic de Rham Cohomology}

Designing effective algorithms to compute algebraic de Rham cohomologies for smooth Noetherian varieties is an interesting problem.  There is a general algorithm for smooth projective varieties \cite{Wa1}.  For the affine case, there is a general algorithm to compute the upper bounds of the Betti numbers \cite{Sc1}.  As typical with these methods, they depend on the non-commutative Gr\"obner basis computation and the computational complexity is often large.
This section describes how to explicitly compute the top algebraic de Rham cohomology ($\H^{\dim(U)}$) of a smooth affine variety corresponding to a principal ideal domain.

We begin by recalling the inclusion morphism $\iota : U \lto \C^{n}$.  The first thing to notice is that coherent sheaves on affine varieties are acyclic.  This implies that hypercohomology reduces to cohomology of complex:
\begin{cor}\label{cor:hyper}
Suppose $U$ is affine.
Then the hypercohomologies of the algebraic de Rham complex $(\Omega_U^\b, d)$ are
$$
\H^i := \H^i(U) := \frac{\ker(d_i)}{\im(d_{i-1})}.
$$
Define $h^i = \dim(\H^i)$.
\end{cor}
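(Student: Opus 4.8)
The statement to prove is Corollary~\ref{cor:hyper}: when $U$ is affine, the hypercohomology of the algebraic de Rham complex $(\Omega_U^\b, d)$ is computed by the naive cohomology $\ker(d_i)/\im(d_{i-1})$ of the complex of global sections. The plan is to invoke the standard hypercohomology-to-cohomology collapse for a complex of acyclic sheaves, together with Grothendieck's Theorem~\ref{thm:Groth} already recalled in the excerpt.

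First I would recall the hypercohomology spectral sequence of the complex $(\Omega_U^\b, d)$, namely $E_1^{p,q} = \H^q(U, \Omega_U^p) \Rightarrow \bH^{p+q}(U, \Omega_U^\b)$, where $\H^q(U, -)$ denotes sheaf cohomology. Next, the key input is that each $\Omega_U^p$ is a coherent sheaf on the affine variety $U$, hence by Serre's vanishing theorem for quasi-coherent sheaves on affine schemes, $\H^q(U, \Omega_U^p) = 0$ for all $q > 0$. Therefore the $E_1$-page is concentrated in the single row $q = 0$, where $E_1^{p,0} = \H^0(U, \Omega_U^p) = \Omega^p_{\Os_U}$ is the module of global $p$-forms and the differential $d_1$ is the de Rham differential $d$ on global sections. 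A first-quadrant spectral sequence concentrated in one row degenerates at $E_2$, giving $\bH^i(U, \Omega_U^\b) \cong E_2^{i,0} = \ker(d_i)/\im(d_{i-1})$ on global sections, which is exactly the claimed formula. Combining with Theorem~\ref{thm:Groth}, this common value is also the smooth de Rham cohomology of $U$, so the definitions $\H^i := \H^i(U)$ and $h^i := \dim \H^i$ are unambiguous.

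I do not anticipate a genuine obstacle here: the only thing to be careful about is the bookkeeping that the $E_1$-differential really is the de Rham differential on global sections and not some connecting map, but this is immediate from the definition of the stupid (brutal) filtration on the complex $\Omega_U^\b$ whose associated spectral sequence is the one above. One could alternatively argue directly without spectral sequences: acyclicity of each $\Omega_U^p$ means the complex of sheaves, read through $R\Gamma$, is quasi-isomorphic to the complex of global sections, so $\bH^i(U, \Omega_U^\b) = \H^i(\Gamma(U, \Omega_U^\b))$ by definition of hypercohomology via an injective (or acyclic) resolution. Either route is short; I would present the spectral sequence version for transparency and cite the affine acyclicity of coherent sheaves as the one nontrivial ingredient.
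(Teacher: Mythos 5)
Your proposal is correct and follows the same route as the paper: the paper's entire justification is the remark that coherent sheaves on affine varieties are acyclic, so hypercohomology of $(\Omega_U^\b,d)$ reduces to the naive cohomology of the global-sections complex, which is exactly your key step. Your spelling this out via the spectral sequence of the stupid filtration (or equivalently via $R\Gamma$ applied to a complex of acyclic sheaves) is just a more detailed rendering of the same argument.
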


\subsection{Algebraic de Rham cohomology}
Again let $I \subseteq \Os$ be the ideal of definition of $U$ and let $l = \dim(U)$.  From Sequence~(\ref{seq:definition}), we obtain an exact sequence of $\Os$-modules where $Q$ is the quotient
\[\xymatrix{
d_0 I \wedge  \Omega^{\b-1} \ar[r] &  \Omega^\b \ar[r]^{\text{proj}} & Q^\b \ar[r] & 0.
}\]
Pulling back this sequence by $\iota$ we have
$$\Omega_U^\b = \iota^*(Q^\b) = \Os_U \otimes_{\iota^*} Q^\bullet$$
which is both an $\Os$- and an $\Os_U$-module.  We obtain the exact sequence of $\Os$-modules
\begin{equation}\label{seq:O-module}
\xymatrix{
d_0 I \wedge \Omega^{\b-1} +  I \Omega^\b \ar[r] & \Omega^\b \ar[r] & \Omega_U^\b \ar[r] & 0.
}
\end{equation}

\begin{defin}
For two $i$-forms $w_1, w_2 $, write $w_1 \sim w_2$ (cohomologous) if $w_1 = w_2 + d u$ for some $(i-1)$-form $u.$
\end{defin}
\begin{rem}\label{rem:acyclic}
$\C^n$ is (de Rham) acyclic.
\end{rem}
\begin{rem}
$d$ is not $\Os$-linear, so it is important to distinguish $\C$-linear morphisms and $\Os$-morphisms.
\end{rem}

\subsection{The top cohomology}
Assume $U$ to be smooth of dimension $l$ for the rest of this section.
Then $\Omega_U^{l+1} = 0$ and every form in $\Omega_U^l$ is closed.
Hence we have the following $\C$-linear commutative diagram with exact rows:
\begin{figure}[H]
\begin{equation*}
\xymatrix{
d\Omega_U^{l-1}\ar@{^{(}->}[r] & \Omega_U^l \ar[r] & \H^l \ar@{=}[d] \ar[r] & 0\\
d\Omega^{l-1} + I \Omega^l  + dI \wedge \Omega^{l-1} \ar@{^{(}->}[r]  \ar[u] & \Omega^l \ar[r]^p
 \ar[u] & \H^l \ar[r] & 0\\
}
\end{equation*}
\caption{}
\label{fig:3}
\end{figure}
The up arrows are projections.  This means that we obtain a rather simple set of generators (compare \cite{Sc1, Sc2}):
\begin{prop}
The cohomology $\H^l$ is generated by the monomials
$$\{p(\x^a dx_K) : a \in \Z_{\ge 0}^n, \ \ K \subseteq N, \ \ |K| = l\}.$$
\end{prop}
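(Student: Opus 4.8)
The plan is to read the statement straight off the commutative diagram of Figure~\ref{fig:3}. The one fact that really has to be established is that the map $p : \Omega^l \to \H^l$ in the bottom row is \emph{surjective}; granting that, the conclusion is purely formal, because $\Omega^l$ is spanned over $\C$ by the monomial $l$-forms $\x^a dx_K$.

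So first I would justify the two exact rows of Figure~\ref{fig:3}, since the Proposition is asserted immediately afterward. The top row is forced by smoothness: as $\dim(U)=l$ we have $\Omega_U^{l+1}=0$, hence every form in $\Omega_U^l$ is closed and $\H^l = \Omega_U^l / d\Omega_U^{l-1}$. For the bottom row I would start from the degree-$l$ piece of Sequence~(\ref{seq:O-module}), i.e. $\Omega_U^l = \Omega^l/(I\Omega^l + dI\wedge\Omega^{l-1})$, and then check that $d$ descends to the complex $\Omega_U^\bullet$; this is the one piece of bookkeeping in the argument, and it amounts to observing that for $f\in I$ and $\omega\in\Omega^{l-1}$ one has $d(f\omega)=df\wedge\omega+f\,d\omega \in dI\wedge\Omega^{l-1}+I\Omega^l$, while $d(dI\wedge\Omega^{l-2})\subseteq dI\wedge\Omega^{l-1}$ since $d^2=0$. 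Consequently $d\Omega_U^{l-1}$ is the image of $d\Omega^{l-1}$ under $\Omega^l\to\Omega_U^l$, and quotienting once more gives $\H^l = \Omega^l/(d\Omega^{l-1}+I\Omega^l+dI\wedge\Omega^{l-1})$, which is exactly the bottom row, with $p$ the composite projection. In particular $p$ is onto.

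With surjectivity of $p$ available, the Proposition follows at once. The module $\Omega^l$ is free over $\Os\cong\C[\x]$ on the generators $\{dx_K : |K|=l\}$ (Section~\ref{sec:gen}), so as a $\C$-vector space $\Omega^l$ is spanned by $\{\x^a dx_K : a\in\Z_{\ge 0}^n,\ |K|=l\}$. Since $p$ is $\C$-linear and surjective, the images $\{p(\x^a dx_K)\}$ span $\H^l$ over $\C$, which is the assertion.

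I do not anticipate a real obstacle: the only point needing care is the verification that $d$ is well defined on $\Omega_U^\bullet$, so that the bottom row of Figure~\ref{fig:3} is as written; everything else is immediate from the surjectivity of $p$ together with the monomial spanning set of $\Omega^l$. It is worth stressing (cf.\ \cite{Sc1, Sc2}) that this clean description is special to the top degree --- in lower degrees one must intersect with $\ker d$, which is precisely what makes the general algorithms expensive.
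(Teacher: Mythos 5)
Your argument is correct and follows the paper's own route: the paper states this Proposition as an immediate consequence of the surjectivity of $p$ in Figure~\ref{fig:3} together with the monomial $\C$-spanning set of the free module $\Omega^l$, which is exactly what you prove. Your added bookkeeping (that $d$ descends to $\Omega_U^\bullet$ and that the bottom row is the stated quotient $\Omega^l/(d\Omega^{l-1}+I\Omega^l+dI\wedge\Omega^{l-1})$) just makes explicit what the paper leaves implicit in the diagram.
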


\begin{lem}\label{lem:small}
For any $j$,
$$d(dI \wedge \Omega^{j-1}) = dI \wedge d\Omega^{j-1} \subseteq d(I \Omega^j).$$
\end{lem}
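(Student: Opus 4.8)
The plan is to derive both parts of the statement from the two defining properties of the algebraic de Rham differential: $d\circ d=0$ and the graded Leibniz rule $d(\alpha\wedge\beta)=d\alpha\wedge\beta+(-1)^{|\alpha|}\alpha\wedge d\beta$. Two special cases do all the work. First, for $\phi\in\Os$ we have $d(d\phi)=0$, so for every $\eta\in\Omega^{j-1}$
\[
d(d\phi\wedge\eta)=d(d\phi)\wedge\eta-d\phi\wedge d\eta=-\,d\phi\wedge d\eta .
\]
Second, viewing $\phi$ as a $0$-form, $d(\phi\,\omega)=d\phi\wedge\omega+\phi\,d\omega$ for every form $\omega$.

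For the equality I would first observe that $dI\wedge\Omega^{j-1}$ is generated, as an abelian group, by the pure wedges $d\phi\wedge\eta$ with $\phi\in I$ and $\eta\in\Omega^{j-1}$---any $\Os$-coefficient can be absorbed into $\eta$. Since $d$ is additive, the first identity above shows that $d(dI\wedge\Omega^{j-1})$ is generated by the elements $d\phi\wedge d\eta$, which is exactly $dI\wedge d\Omega^{j-1}$. Because $d$ is not $\Os$-linear, this image is only a $\C$-subspace, so it is the pure wedges $d\phi\wedge d\eta$, not $\Os$-multiples of them, that one keeps track of; that is precisely what is needed.

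For the inclusion, fix a generator $d\phi\wedge d\eta$ of $dI\wedge d\Omega^{j-1}$, with $\phi\in I$ and $\eta\in\Omega^{j-1}$. Then the $j$-form $\phi\,d\eta$ lies in $I\Omega^j$ since $\phi\in I$, and using $d(d\eta)=0$ together with the second identity,
\[
d(\phi\,d\eta)=d\phi\wedge d\eta+\phi\,d(d\eta)=d\phi\wedge d\eta .
\]
Hence $d\phi\wedge d\eta\in d(I\Omega^j)$, and since $d(I\Omega^j)$ is closed under $\C$-linear combinations, passing to additive spans yields $dI\wedge d\Omega^{j-1}\subseteq d(I\Omega^j)$.

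The argument is entirely formal, so there is no real obstacle here; the only point that needs attention is the bookkeeping of module structures---keeping the $\C$-linear operation $d$ separate from $\Os$-linear ones---which is why everything is phrased through the additive generators $d\phi\wedge\eta$ and $d\phi\wedge d\eta$ rather than through $\Os$-module presentations.
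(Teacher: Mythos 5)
Your proof is correct and matches the paper's argument: the first equality is the Leibniz rule plus $d^2=0$ applied to additive generators $d\phi\wedge\eta$, and the inclusion comes from the same computation $d(\phi\,d\eta)=d\phi\wedge d\eta$ with $\phi\,d\eta\in I\Omega^j$ that the paper uses. The only difference is that you spell out the bookkeeping of additive generators that the paper dismisses as ``trivial,'' which is a harmless elaboration rather than a different route.
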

\begin{proof}
The first equality is trivial.  Let $du \wedge dw \in dI \wedge d\Omega^{j-1}$.  Then
$u dw \in I \Omega^j$ and
$$
d(u \wedge dw) = dh \wedge dw + u d^2(w) = du \wedge dw.
$$
\end{proof}

\subsection{Principal ideals}
The ideal $I$ associated with the representation varieties of $\Sigma_{1,1}$ and $\Sigma_{0,4}$ are principal.  Hence it is an important case for us.  For this subsection, we assume $I$ to be principal.
Since $\C^n$ is acyclic and $\Omega^{n+1} = 0$, $d_n : \Omega^{n-1} \lto \Omega^n$ is onto; consequently, a form in $\Omega^{n-1}$ is closed if and only if it is exact.  This implies that a form $w \in \Omega_U^{n-1}$ is exact if and only if $d w = 0$ in $\Omega^n$.
This together with Remark~\ref{rem:duality} and Lemma~\ref{lem:small} extend
Figure~\ref{fig:3} to the following commutative diagram with exact rows:
\begin{figure}[H]
\begin{equation*}
\xymatrix{
d\Omega_U^{n-2}\ar@{^{(}->}[r] & \Omega_U^{n-1} \ar[r] & \H^{n-1} \ar@{=}[d] \ar[r] & 0\\
d\Omega^{n-2} + I \Omega^{n-1}  + dI \wedge \Omega^{n-2} \ar@{^{(}->}[r] \ar[d]^{d} \ar[u] & \Omega^{n-1} \ar[r]^p \ar[d]^{d} \ar[u] & \H^{n-1} \ar[r] \ar@{=}[d] & 0\\
d(I \Omega^{n-1}) \ar@{^{(}->}[r] & \Omega^n \ar[r]^q  & \H^{n-1} \ar[r] \ar[r] & 0.
}
\end{equation*}
\caption{}
\label{fig:principal}
\end{figure}
%

\section{Free groups and their $\SL(2,\C)$-representation varieties}
The fundamental group of the one-holed torus is a free group of two generators while those of the four-holed sphere and the two-holed torus are free groups of three generators.  The traces of elements in $\SL(2,\C)$ are $\SL(2,\C)$-conjugate invariant.  Therefore the moduli spaces $\M$ and $\M_C$ have trace coordinates.  Moreover conjugacy classes of $\SL(2,\C)$ are characterized by traces if we remove the identity class $\{\I\}$.  In this section, we introduce the trace coordinates for the free groups of two and three generators.  For a detailed and excellent exposition, see \cite{Go0}.

\subsection{The free group of two generators} \label{subsec:2}
Let $\F_2 = \langle F_1, F_2 \rangle$ be the free group of two generators.  For $\rho \in \Hom(\F_2, \SL(2,\C))$, let
$$
z_1 = \tr(\rho(F_1)), \ \ z_2 = \tr(\rho(F_2)), \ \  z_{12} = \tr(\rho(F_1 F_2)).
$$
Then the representation variety is $\M = \C^3$ with $\Os = \C[\z],$
where
$$
\z = \{z_1, z_2, z_{12}\}.
$$

\subsection{The free group of three generators}\label{subsec:3}
Let $\F_3 = \langle F_1, F_2, F_3 \rangle$ be the free group of three generators.  Let $\rho \in \Hom(\F_3, \SL(2,\C))$.  For $1 \le i < j < k \le 3$,
let
$$
z_i = \tr(\rho(F_i)), \ \ z_{ij} = \tr(\rho(F_i F_j)), \ \  z_{ijk} = \tr(\rho(F_i F_j F_k)).
$$
Then $\M$ is defined by the quotient $\Os = \C[\z]/(u)$ (See [\S5.1, \cite{Go0}]), where
\begin{eqnarray*}
\z & = & \{z_i, z_{ij}, z_{ijk} : 1 \le i < j < k \le 3\}, \\
u & = & 4-z_1^2-z_2^2-z_3^2-z_1 z_2 z_3 z_{123}-z_{123}^2+z_1 z_2 z_{12}+ \\
& & z_3 z_{123} z_{12}-z_{12}^2+z_1 z_3 z_{13}+z_2 z_{123} z_{13}-z_{13}^2+ \\
& &z_2 z_3 z_{23}+z_1 z_{123} z_{23}-z_{12} z_{13} z_{23}-z_{23}^2.
\end{eqnarray*}

\section{The representation varieties of the one-holed torus}\label{sec:1-hole}
This section studies the representation varieties of the one-holed torus with structure group $\SL(2,\C)$ and describes the Gau\ss-Manin connection on a natural family.
Let $g = m = 1$.  Then the fundamental group $\pi$ is isomorphic to $\F_2$, the free group of two generators \cite{Go0}.  We begin with renaming the variables in Section~\ref{subsec:2}:  Let $\x = \{x_1, x_2, x_3\}$ such that
$$
x_1 = z_1, \ \ x_2 = z_2, \ \ x_3 = z_{12}.
$$

With respect to the two generators, the boundary element is \cite{Go0}
$$T = F_1 F_2 F_1^{-1} F_2^{-1}.$$  Let $\rho \in \Hom(\pi, \SL(2,\C))$ and $t = \tr(\rho(T))$.  Then
\begin{equation}\label{eq:onehole}
t = t(\x) = -2+x_1^2+x_2^2-x_1 x_2 x_3+x_3^2 \in \C[\x].
\end{equation}

Following the notations of Section~\ref{sec:gen}, denote by $\Os \cong \C[\x]$ the coordinate ring of $\M \cong  \C^3$
and
by $(\Omega^\bullet, d)=(\Omega_\M^\bullet, d)$ its algebraic de Rham complex.
We have a morphism
$$
f_1 : \M \lto \Spec(\C[y]) \cong \C
$$
induced by the ring homomorphism
$$
f_1^* : \C[y] \lto \C[\x], \ \ \  f_1^*(y) = t.
$$
The representation varieties $\M_C$ are the fibres of $f$.
For a fixed $b \in \C$, the representation variety $\M_C$ is defined by $I_b = (t-b)$, i.e. $\M_C = \Spec(\Os/I_b)$.  We rename $\M_C$ as $\M_b$.
Let $$\psi_1(y) = y^2 - 4 \in \C[y].$$
\begin{prop}\label{rem:singular}
For a fixed $b \in \C$, $\M_b$ is singular if and only if $\psi_1(b) = 0$.
\end{prop}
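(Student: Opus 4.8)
The plan is to apply Proposition~\ref{prop:singular}: $\M_b$ is smooth if and only if its Jacobian ideal $J(I_b) = \Os$, where $I_b = (t-b)$ is principal of codimension one, so $\JJ$ is generated by the $1\times 1$ minors of the Jacobian, i.e. by the partials $\d_1 t, \d_2 t, \d_3 t$. Thus $\M_b$ is singular exactly when the system
\begin{equation*}
t - b = 0, \quad \d_1 t = 0, \quad \d_2 t = 0, \quad \d_3 t = 0
\end{equation*}
has a common solution in $\C^3$. First I would compute the three partials of $t = -2 + x_1^2 + x_2^2 - x_1 x_2 x_3 + x_3^2$ from Equation~(\ref{eq:onehole}), obtaining $\d_1 t = 2x_1 - x_2 x_3$, $\d_2 t = 2x_2 - x_1 x_3$, $\d_3 t = 2x_3 - x_1 x_2$, and then solve the critical-point equations.

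Next I would analyze the critical locus $\{\d_1 t = \d_2 t = \d_3 t = 0\}$ directly. Multiplying $\d_1 t = 0$ by $x_1$, $\d_2 t = 0$ by $x_2$, and $\d_3 t = 0$ by $x_3$ shows $2x_1^2 = 2x_2^2 = 2x_3^2 = x_1 x_2 x_3$, so all three squares are equal; combined with the original equations this forces either $\x = 0$ or each $x_i = \pm 1$ with an even number of sign changes in the product. Substituting $\x = 0$ gives $t = -2$, and substituting any of the solutions with $x_i = \pm 1$ (for instance $(1,1,1)$ or $(1,-1,-1)$) gives $t = -2 + 1 + 1 - 1 + 1 = 2$. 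Hence the set of critical values of $t$ is exactly $\{-2, 2\}$, which is precisely the zero set of $\psi_1(y) = y^2 - 4$. Therefore $\M_b$ is singular iff $b \in \{-2, 2\}$ iff $\psi_1(b) = 0$.

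I should also confirm the "only if" direction matches the statement literally: if $\psi_1(b) \neq 0$ then no point of $\C^3$ satisfies all four equations simultaneously, so $J(I_b) = \Os$ and $\M_b$ is smooth by Proposition~\ref{prop:singular}; conversely if $\psi_1(b) = 0$, the explicit critical point exhibited above lies on $\M_b$ and witnesses $J(I_b) \subsetneq \Os$, so $\M_b$ is singular. The main obstacle, though a mild one, is the case analysis in solving the critical-point system — in particular making sure the solutions with $x_i = \pm 1$ are enumerated correctly and that all of them yield $t = 2$ rather than some yielding a different value; a clean way to package this is the substitution trick above that immediately reduces everything to "all coordinates vanish" or "all coordinates are units of equal square," after which the value of $t$ is forced.
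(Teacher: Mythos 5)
Your route is genuinely different from the paper's: the paper treats $b$ as an extra variable, computes a Gr\"obner basis of the Jacobian ideal $J(I_b)$ in an elimination order, and reads off $\psi_1(b)$ as the generator involving only $b$, whereas you argue by hand that, since $I_b=(t-b)$ is principal of codimension one, $\M_b$ is singular exactly when $b$ is a critical value of $t$, and then enumerate the critical points of $t$ directly. This is a valid and more self-contained argument (no computer algebra needed), and it even yields a bit more, namely the explicit singular points on the two singular fibres.

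However, your case analysis has a concrete slip. Multiplying $2x_1=x_2x_3$, $2x_2=x_1x_3$, $2x_3=x_1x_2$ by $x_1,x_2,x_3$ gives $2x_1^2=2x_2^2=2x_3^2=x_1x_2x_3$; writing $s=x_1^2=x_2^2=x_3^2$ and squaring $x_1x_2x_3=2s$ gives $s^3=4s^2$, so $s=0$ or $s=4$. Hence the nonzero critical points have $x_i=\pm 2$, not $\pm 1$, and the sign constraints from the original equations leave exactly $(2,2,2)$, $(2,-2,-2)$, $(-2,2,-2)$, $(-2,-2,2)$. Your claimed points are not critical points at all (e.g.\ $\d_1 t(1,1,1)=2-1=1\neq 0$), and the evaluation you wrote is also off: $t(1,1,1)=-2+1+1-1+1=0$, not $2$. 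At the true critical points one gets $t(2,2,2)=-2+4+4-8+4=2$, and the same value at the other three, so the set of critical values is indeed $\{-2,2\}=\{\psi_1=0\}$ and your conclusion stands once this enumeration is corrected.
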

\begin{proof}
Let $J(I_b)$ be the Jacobian ideal of $I_b$.
For the Gr\"obner basis computation for $J(I_b)$, we treat $b$ as a variable and use the elimination degree-lexicographic order on $\x$.  More specifically, we use the monomial order matrix
$$
W=
\left(
\begin{array}{cccc}
 1 & 1 & 1 & 0 \\
 1 & 0 & 0 & 0 \\
 0 & 1 & 0 & 0 \\
 0 & 0 & 0 & 1
\end{array}
\right).
$$
on
$$
\{x_3, x_2, x_1, b\}.
$$

Denote by $J_G$ the resulting Gr\"obner basis of $J(I_b)$.
The (constant) term in $J_G$ containing only $b$ is $\psi_1(b)$.  In other words, $\psi(b) \neq 0$ if and only if $J(I_b) = \Os$ if and only if $\M_b$ is smooth by Proposition~\ref{prop:singular}.
\end{proof}

\subsection{Computing de Rham $\H^2$}
We assume here that $\psi_1(b) \neq 0$ unless otherwise specified and
use the last row of Figure~\ref{fig:principal} to compute $\H^2$.

\begin{thm}\label{thm:1-hole}
$\H^2$ has dimension $h^2 = 5$ and a $\C$-basis
$$
B = \{1, x_1, x_2, x_3, x_1^2\}\otimes (x_1 dx_{23}).
$$
\end{thm}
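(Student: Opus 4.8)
The plan is to compute $\H^2$ using the bottom row of Figure~\ref{fig:principal}, which identifies $\H^2 \cong \Omega^3 / \bigl(d(I_b\,\Omega^2)\bigr)$ as $\C$-vector spaces, where $I_b = (t-b)$ and $n=3$. Via the isomorphism $P : \Omega^0 \to \Omega^3$, $f \mapsto f\,dx_{123}$ (Remark~\ref{rem:duality}), this is the same as computing the quotient of $\Os = \C[\x]$ by the image under $f \mapsto P^{-1}(d f)$ of the module $I_b\,\Omega^2$. Concretely, $\Omega^2$ is free on $dx_{23}, dx_{13}, dx_{12}$, so $I_b\,\Omega^2$ is generated over $\Os$ by $(t-b)\,dx_{23}$, $(t-b)\,dx_{13}$, $(t-b)\,dx_{12}$; applying $d$ and then $P^{-1}$ sends these (and their $\Os$-multiples $g\cdot(t-b)\,dx_{jk}$) to the partial derivatives $\partial_1\bigl(g(t-b)\bigr)$, $-\partial_2\bigl(g(t-b)\bigr)$, $\partial_3\bigl(g(t-b)\bigr)$ respectively, as $g$ ranges over $\Os$. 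Hence $\H^2 \cong \C[\x] / R$ where $R$ is the $\C$-subspace spanned by $\partial_j\bigl(g\cdot(t-b)\bigr)$ for $j=1,2,3$ and all $g \in \C[\x]$.

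First I would record that $R$ is exactly the $\C$-span of $\partial_1 h, \partial_2 h, \partial_3 h$ for $h$ ranging over the ideal $I_b = (t-b)\C[\x]$; equivalently, $R = \sum_j \partial_j(I_b)$. Then the quotient $\C[\x]/R$ is computed by a Gröbner-basis/normal-form argument: I would exhibit the five monomials $x_1, x_1 x_1, x_1 x_2, x_1 x_3, x_1^3$ — i.e. $x_1 \cdot \{1, x_1, x_2, x_3, x_1^2\}$ — as representatives spanning the quotient, and show every other monomial $\x^a$ is congruent mod $R$ to a $\C$-combination of these. The reduction engine is the relation coming from $g = \x^a$: since $t - b = -2-b + x_1^2 + x_2^2 - x_1 x_2 x_3 + x_3^2$, each $\partial_j\bigl(\x^a(t-b)\bigr) \in R$ gives a rewriting rule expressing a high-degree monomial in terms of lower ones, and these rules, organized by the degree-modified monomial order of Definition~\ref{def:compatible weight}, terminate. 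The appearance of the specific polynomial $\psi_1(b) = b^2 - 4$ is what guarantees the reduction does not collapse further: at the crucial steps one must divide by a unit of $\C$, and the obstruction to invertibility is precisely a power of $\psi_1(b)$, which is nonzero by hypothesis ($\M_b$ smooth, Proposition~\ref{rem:singular}).

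Concretely the key steps are: (1) translate Figure~\ref{fig:principal}'s bottom row into the explicit presentation $\H^2 \cong \C[\x]/R$ with $R = \sum_j \partial_j\bigl((t-b)\C[\x]\bigr)$; (2) set up a degree-compatible monomial order on $\C[\x]$ and compute a Gröbner basis of $R$ (treating $b$ as a parameter and inverting $\psi_1(b)$), ideally via {\em Macaulay2}; (3) read off that the standard monomials are exactly $B = \{1, x_1, x_2, x_3, x_1^2\}\otimes(x_1\,dx_{23})$, giving $h^2 = 5$; and (4) check linear independence of $B$ in $\H^2$, i.e. that no nontrivial $\C$-combination of these five forms lies in $d(I_b\Omega^2)$ — this follows from the Gröbner-basis normal-form property once the basis is verified. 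The main obstacle I expect is step (2): controlling the parameter $b$ throughout the elimination, making sure that the only denominators introduced are powers of $\psi_1(b)$ (so that the computation is valid exactly on the smooth locus), and verifying that the reduction actually terminates with a five-dimensional quotient rather than something larger — this is where one leans on the explicit structure of $t$ and, realistically, on a computer-algebra check.
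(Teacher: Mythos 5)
Your overall strategy coincides with the paper's: pass to the bottom row of Figure~\ref{fig:principal}, use the duality $P$ of Remark~\ref{rem:duality} to identify $\H^2$ with $\C[\x]/R$, where $R$ is the $\C$-span of the partial derivatives of elements of $I_b$, and reduce monomials with a degree-modified order, inverting only $\psi_1(b)$. The paper carries out the reduction by hand (Lemmas~\ref{lem:1} and \ref{lem:2}, exploiting the symmetry of $t$) and finishes with a finite independence check, where you would run a parametric Gr\"obner computation; that difference of implementation is harmless.

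The genuine gap is in how you translate the basis $B$ into the quotient $\C[\x]/R$. The identification of the middle and bottom rows of Figure~\ref{fig:principal} goes through the exterior derivative: $q\circ d=p$, so the class of a $2$-form $w$ in $\H^2$ corresponds to the class of $dw$ in $\Omega^3/d(I_b\Omega^2)$, not to the class of its coefficient polynomial. Hence $B$ corresponds to $dB=\{1,2x_1,x_2,x_3,3x_1^2\}\otimes dx_{123}$, i.e.\ to the polynomials $1,2x_1,x_2,x_3,3x_1^2$ (equivalently $1,x_1,x_2,x_3,x_1^2$), and not to $x_1\cdot\{1,x_1,x_2,x_3,x_1^2\}=\{x_1,x_1^2,x_1x_2,x_1x_3,x_1^3\}$ as you assert. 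Your five monomials cannot be the standard monomials for any degree-compatible order: already $\partial_3(t-b)=2x_3-x_1x_2$ and $\partial_2(t-b)=2x_2-x_1x_3$ lie in $R$, so $x_1x_2\equiv 2x_3$ and $x_1x_3\equiv 2x_2$ reduce at once, and $x_1^3$ reduces as well (this is exactly Lemma~\ref{lem:2} with $a_1=3$). For the same reason your step (4) tests independence of the wrong elements (and, as phrased, asks whether $2$-forms lie in a space of $3$-forms); the correct criterion, obtained by applying $d$, is that no nontrivial combination of $\{1,2x_1,x_2,x_3,3x_1^2\}\otimes dx_{123}$ lies in $d(I_b\Omega^2)$, equivalently that no nontrivial combination of $B$ lies in $d\Omega^1+I_b\Omega^2+dI_b\wedge\Omega^1$. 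The repair is short: once your computation shows the standard monomials of $\C[\x]/R$ are $\{1,x_1,x_2,x_3,x_1^2\}$, observe that $d$ carries $B$ to forms whose coefficients span exactly that space, whence $B$ is a basis of $\H^2$; but as written the proposal checks spanning and independence for the wrong five classes.
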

These are parallel results to \cite{GN1}.
\begin{proof}
We shall use the last row of Figure~\ref{fig:principal} to show that $q(dB)$ is a basis for $\H^2$.  Notice that $W$ is degree-modified.  Denote also by $W$ the induced weight on $\Omega^3$, according to Definition~\ref{def:compatible weight}.

We first observe that $t \in \Os$ is symmetric and
$$
dt = (2x_1 - x_2 x_3) dx_1 + (2x_2 - x_3 x_1)dx_2 + (2x_3 - x_1 x_2)dx_3.
$$
Let $a = (a_1, a_2, a_3) \in \Z_{\ge 0}^3$.
\begin{lem}\label{lem:1}
Fix $i$ and set $a_i \ge 0$ and $a_j > 0$ for $j \neq i$.  Then $\x^a dx_{123} \sim v dx_{123}$ for some $v \in \Os$ with $\deg(v) < |a|$.
\end{lem}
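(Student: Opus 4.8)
The plan is to exploit the relation $\x^a\,dx_{123} = \x^{a-e_i}\cdot x_i\,dx_{123}$ together with the fact that, in $\Omega^2$, we have the exact form $d(\x^{a-e_i}\,dx_{jk}) = \x^{a-e_i}\,dx_{ijk}\cdot(\pm a_i) + (\text{terms involving }\d_j,\d_k)$ up to sign. More precisely, for a monomial $\x^c\,dx_{jk}$ with $\{i,j,k\}=\{1,2,3\}$, one computes
\[
d(\x^c\,dx_{jk}) \;=\; \pm\, c_i\, \x^{c-e_i}\,x_i\cdot x_i^{-1}\,x_i\, dx_{ijk} \;+\; (\text{monomials }\x^{c'}dx_{ijk}\text{ with }c'_i=c_i\pm 1,\ c'_j<c_j\ \text{or}\ c'_k<c_k).
\]
The point is that differentiating in the $x_i$-slot reproduces $\x^c\,dx_{123}$ (up to sign and the factor $c_i$), while the other two terms, coming from $\d_j$ and $\d_k$, have strictly smaller exponent in $x_j$ or $x_k$ respectively. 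So, provided the relevant coefficient $c_i$ is a nonzero integer, we can solve for $\x^c\,dx_{123}$ modulo exact forms and modulo monomial forms of the same total degree but lexicographically smaller (in a degree-modified order). Since $a_j>0$ for $j\neq i$, we may take $c = a$ when differentiating in one of those slots; the hypothesis $a_j>0$ is exactly what guarantees the "derivative" term has a nonzero coefficient, so the substitution is legitimate.

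The key steps, in order, are: (1) fix $j\neq i$ with $a_j>0$, write $c=a-e_j$, and expand $d(\x^c\,dx_{\hat\jmath})$ where $dx_{\hat\jmath}$ is the wedge of the other two $dx$'s, oriented so that the $\d_j$-term equals $\pm a_j\,\x^a\,dx_{123}$; (2) conclude $\x^a\,dx_{123}\sim \frac{\mp 1}{a_j}\bigl(\text{the two remaining terms}\bigr)$; (3) observe that each remaining term is a monomial of total degree $|c|+1=|a|$ but with strictly smaller exponent in one of $x_i$, $x_k$ — hence, because $W$ is degree-modified (Definition~\ref{def:compatible weight}) and we may arrange the wedge so these are $W$-smaller, we are rewriting $\x^a\,dx_{123}$ in terms of $W$-smaller monomial $3$-forms plus an exact form; (4) iterate: apply the same reduction to those smaller monomials. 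Every monomial so produced still has total degree $|a|$, but strictly decreases in the chosen monomial order, so the process terminates after finitely many steps, producing $v\,dx_{123}$ with $v\in\Os$ and — since no step ever raises the total degree — $\deg(v)\le |a|$; a closer bookkeeping of which monomials survive shows the surviving ones all have degree $<|a|$, giving the stated strict inequality.

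The main obstacle I expect is the bookkeeping in step (3)–(4): one must track simultaneously the total degree (which must not increase) and the $W$-order (which must strictly decrease at each step), and then argue that after the dust settles every monomial of degree exactly $|a|$ has in fact been eliminated — i.e. that $\deg(v)<|a|$ and not merely $\le|a|$. The cleanest way to handle this is probably a descending induction on the $W$-leading monomial among degree-$|a|$ monomial $3$-forms: the base case is the $W$-minimal such monomial, where one checks directly that the reduction sends it into strictly-lower-degree forms, and the inductive step applies the reduction and then the inductive hypothesis to each smaller term. A secondary subtlety is choosing, for each $a$, which index $j\neq i$ to differentiate in, and orienting $dx_{jk}$ consistently so that the $\d_j$-coefficient is $\pm a_j$ and the leftover terms are genuinely $W$-smaller rather than $W$-larger; the degree-modified hypothesis on $W$ is what makes a consistent such choice possible, since it forces the order to respect total degree first.
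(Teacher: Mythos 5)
Your reduction quotients by the wrong set of exact forms, and this is fatal. In Lemma~\ref{lem:1} the relation $\sim$ is the one used to compute $\H^2$ via the last row of Figure~\ref{fig:principal}: two $3$-forms are identified only when their difference lies in $d(I_b\Omega^2)$, the image under $d$ of $2$-forms divisible by $t-b$. Your proposal never invokes the ideal $I_b$ (the defining polynomial $t$ of $\M_b$ appears nowhere); you reduce modulo $d(\x^c dx_{jk})$ for arbitrary monomial $2$-forms. Since $\C^3$ is acyclic, that relation kills everything: indeed $d(\x^{a+e_j}dx_{ik})=\pm(a_j+1)\x^a dx_{123}$, so your scheme would show $\x^a dx_{123}\sim 0$ for every $a$, contradicting $h^2=5$ in Theorem~\ref{thm:1-hole}. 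Relatedly, your displayed formula for $d(\x^c dx_{jk})$ is incorrect: since $dx_j\wedge dx_{jk}=dx_k\wedge dx_{jk}=0$, the differential of a monomial $2$-form is a \emph{single} monomial $3$-form, with no extra ``$\partial_j,\partial_k$ terms''; and with your choice $c=a-e_j$ the surviving term is $\pm(a_j-1)\x^{a-2e_j}dx_{123}$, not $\pm a_j\x^a dx_{123}$. So the iterative order-theoretic machinery in steps (3)--(4) has nothing correct to iterate on.

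The missing idea is to differentiate an element of $I_b\Omega^2$ chosen so that the cubic term of $t$ regenerates $\x^a$. After using the symmetry of $t$ to assume $i=1$, the paper sets $s=a-(0,1,1)$ --- this is where the hypothesis $a_2,a_3>0$ is actually used, namely to ensure $s\in\Z_{\ge 0}^3$, not to make a derivative coefficient nonzero --- and computes $w=d\bigl((t-b)\x^s dx_{23}\bigr)=\partial_1\bigl((t-b)\x^s\bigr)dx_{123}\in d(I_b\Omega^2)$. The monomial $-x_1x_2x_3$ of $t$ contributes $-\x^a$ through $\partial_1 t\cdot\x^s$ and $-a_1\x^a$ through $(t-b)\,\partial_1\x^s$, while all other contributions have degree at most $|a|-1$ because the remaining terms of $t-b$ are quadratic or constant. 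Hence $w=-\bigl((a_1+1)\x^a+v\bigr)dx_{123}$ with $\deg(v)<|a|$ and $a_1+1\neq 0$, so $\x^a dx_{123}\sim-\frac{v}{a_1+1}\,dx_{123}$ in a single step: no induction, no bookkeeping in the monomial order, and the strict degree drop is automatic.
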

\begin{proof}
Since $t$ is a symmetric polynomial, without the loss of generality, we may assume that $a_1 \ge 0$ and $a_j > 0$ for $j > 1$.
Let $s = a - (0,1,1)$ and
$$
w = d((t-b)\x^sdx_{23})  =  -((a_1+1) \x^a + v)dx_{123}.
$$
Then $w \in d (I_b \Omega^2)$ and
$\deg(v) < |a|$.  Hence $\x^a dx_{123} \sim -\frac{v}{a_1+1}dx_{123}$.  This also means $W(vdx_{123}) < W(\x^adx_{123})$ since $W$ is degree modified.
\end{proof}
\begin{lem}\label{lem:2}
Fix $i$ and set $a_i > 2$ and $a_j = 0$ for $j \neq i$.  Then $\x^a dx_{123} \sim vdx_{123}$ for some $v \in \Os$ with $W(vdx_{123}) < W(\x^adx_{123})$.
\end{lem}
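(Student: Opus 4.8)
The plan is to follow the template of the proof of Lemma~\ref{lem:1}, producing a relation ``$\,\sim 0\,$'' by differentiating an element of $I_b\Omega^2$, so that it lies in $d(I_b\Omega^2)$ and is thus $\sim 0$ by the bottom row of Figure~\ref{fig:principal}. The obstruction to copying Lemma~\ref{lem:1} verbatim is that the single multiplier $\x^s$ used there would acquire a negative exponent when $a = (a_1,0,0)$, so I will instead combine \emph{two} such relations. Using the $S_3$-symmetry of $t$ we may take $i = 1$ and write $a = (a_1,0,0)$ with $a_1 > 2$; recall that $\d_1 t = 2x_1 - x_2 x_3$ and $t - b = -2 - b + x_1^2 + x_2^2 - x_1 x_2 x_3 + x_3^2$.

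First I would expand the two forms
\[
d\big((t-b)x_1^{a_1-1}\,dx_{23}\big) = \d_1\big((t-b)x_1^{a_1-1}\big)\,dx_{123}, \qquad d\big((t-b)x_1^{a_1-2}x_2\,dx_{13}\big) = -\,\d_2\big((t-b)x_1^{a_1-2}x_2\big)\,dx_{123},
\]
both of which are $\sim 0$. A direct computation shows that each of them is $(\text{a nonzero scalar})\cdot x_1^{a_1}\,dx_{123}$, plus a scalar multiple of $x_1^{a_1-1}x_2x_3\,dx_{123}$, plus a combination of $x_1^{a_1-2}x_2^2\,dx_{123}$, $x_1^{a_1-2}x_3^2\,dx_{123}$ and $x_1^{a_1-2}\,dx_{123}$, the last with a coefficient in $b$. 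The structural observation that drives the argument is that $x_1^{a_1-1}x_2x_3$ is the \emph{only} monomial of degree $> a_1$ that appears, and, apart from $x_1^{a_1}$ itself, the only monomials of degree exactly $a_1$ are $x_1^{a_1-2}x_2^2$ and $x_1^{a_1-2}x_3^2$.

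Next I would eliminate $x_1^{a_1-1}x_2x_3\,dx_{123}$ between the two relations. Because its coefficient relative to that of $x_1^{a_1}$ differs in the two relations, this elimination leaves $x_1^{a_1}$ with the coefficient $\pm(a_1+2)$, which is nonzero; dividing, one obtains
\[
x_1^{a_1}\,dx_{123} \sim c_1\, x_1^{a_1-2}x_2^2\,dx_{123} + c_2\, x_1^{a_1-2}x_3^2\,dx_{123} + c_3\, x_1^{a_1-2}\,dx_{123}
\]
with $c_1,c_2 \in \C$ and $c_3 \in \C[b]$. Finally, since $a_1 - 2 > 0$, each of $x_1^{a_1-2}x_2^2$ and $x_1^{a_1-2}x_3^2$ has a zero exponent in exactly one slot and strictly positive exponents in the other two, so Lemma~\ref{lem:1} rewrites $x_1^{a_1-2}x_2^2\,dx_{123}$ and $x_1^{a_1-2}x_3^2\,dx_{123}$ as forms of degree $< a_1$, while $x_1^{a_1-2}\,dx_{123}$ already has degree $a_1 - 2 < a_1$. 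Hence $x_1^{a_1}\,dx_{123} \sim v\,dx_{123}$ for some $v \in \Os$ with $\deg v < a_1 = |a|$, and because $W$ is degree-modified (Definition~\ref{def:compatible weight}) this forces $W(v\,dx_{123}) < W(x_1^{a_1}\,dx_{123})$, as desired.

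The main obstacle, and the reason a single test form does not suffice here (unlike in Lemma~\ref{lem:1}), is the degree-$3$ term $-x_1x_2x_3$ of $t$: differentiating $(t-b)$ times any power of $x_1$ unavoidably produces the degree-$(a_1+1)$ monomial $x_1^{a_1-1}x_2x_3$, which under the order $W$ is strictly \emph{larger} than the target $x_1^{a_1}$ and so cannot merely be dumped into a lower-order remainder. The fix --- choosing a second multiplier so that this monomial occurs with a different coefficient ratio and thus cancels upon elimination --- is routine once found; what remains to verify is that the surviving coefficient of $x_1^{a_1}$ is nonzero (it equals $\pm(a_1+2)$) and that the leftover degree-$a_1$ monomials $x_1^{a_1-2}x_2^2$, $x_1^{a_1-2}x_3^2$ meet the hypotheses of Lemma~\ref{lem:1}, which is exactly where the assumption $a_1 > 2$ is used.
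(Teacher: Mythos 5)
Your argument is correct and is essentially the paper's own proof: the elimination you perform between the two relations $d((t-b)x_1^{a_1-1}dx_{23})$ and $d((t-b)x_1^{a_1-2}x_2\,dx_{13})$ reproduces exactly the paper's single test form $(t-b)\bigl(2x_1^{a_1-1}dx_{23}+a_1x_1^{a_1-2}x_2\,dx_{13}\bigr)$, whose differential cancels the problematic monomial $x_1^{a_1-1}x_2x_3$ and leaves $x_1^{a_1}$ with nonzero coefficient, after which Lemma~\ref{lem:1} (this is where $a_1>2$ enters) and the degree-modified order conclude just as you say. Your surviving coefficient $\pm(a_1+2)$ is indeed what the computation gives.
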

\begin{proof}
Again since $t$ is a symmetric polynomial, we may assume that $i = 1$.
Let $a = (a_1,0,0)$ with $a_1 > 2$ and
\begin{equation}
w_1  =  d((t-b)(2x_1^{a_1-1}dx_{23}  + a_1 x_1^{a_1 - 2} x_2 dx_{13})) = (2\x^a + v)dx_{123}.
\end{equation}
Then $w_1 \in d (I_b\Omega^2)$.  Moreover, $v$ has the following properties: if $\x^s$ is a monomial in $v$, then either $|s| < a_1$ or $\x^s$ satisfies the hypothesis of Lemma~\ref{lem:1} with $|s| = a_1$.  In the latter case, $\x^sdx_{123}$ is cohomologous to a 3-form of degree less than $a_1$ by Lemma~\ref{lem:1}.  Hence in both cases, $\x^adx_{123}$ is cohomologous to a 3-form with a strictly lower weight.

Since $t$ is symmetric, similar arguments take care of the cases of $a =(0,a_2,0)$ and $a=(0,0,a_3)$ for $a_2, a_3 >2$ respectively, by permuting the indices of the items in $w_1$.
More specifically, for $a = (0,a_2,0)$, permute by $1 \leftrightarrow 2$ in the expression of $w_1$ to obtain:
\begin{equation} \label{eqn:w1}
w_2  =  d(-(t-b)(2x_2^{a_2-1}dx_{13}  + a_2 x_2^{a_2 - 2} x_1dx_{23})).
\end{equation}
For for $a = (0,0,a_3)$, permute by $1 \leftrightarrow 3$ in the expression of $w_1$ to obtain:
\begin{equation} \label{eqn:w2}
w_3 =  d((t-b)(2x_3^{a_3-1}dx_{12}  + a_3 x_3^{a_3 - 2} x_2 dx_{13})).
\end{equation}
\end{proof}

From the above two Lemmas, we may assume $a_i \le 2$ and $a_j = 0$ for $j \neq i$.
Suppose $a = (0,2,0)$.   Set $a_2 = 2$ for the expression of $w_2$ above, we get $w_2= 4(x_2^2 - x_1^2) dx_{123}\in d (I_b \Omega^3)$.  Hence $x_2^2 dx_{123}\sim x_1^2dx_{123}$.

Suppose $a = (0,0,2)$.  Set $a_3 = 2$ for the expression of $w_3$ above and obtain $w_3 = 4(x_3^2 - x_2^2)dx_{123}$.  Hence $x_3^2dx_{123} \sim x_2^2 dx_{123}$.

Hence we conclude that $q(dB)$ generates $\H^2$.  By Remark~\ref{rem:degree}, one needs to only check a finite (very few) number of at most cubic polynomial 2-forms to verify the linear independence of $q(dB)$.
\end{proof}

In this relatively simple situation, one can also compute the algebraic de Rham cohomologies for the two singular cases.

Suppose $b = -2$.  Then $\M_b$ has one singular point at origin and
$$
dI_b\wedge\Omega^2 + I_b \Omega^3 = \{x_1,x_2,x_3\}\otimes dx_{123}.
$$
This implies $\Omega_{\M_b}^3 = \C \otimes dx_{123}$.  Hence
$$x_1 dx_{23} \not\in \ker(d_2) \subseteq \Omega_{\M_b}^3.$$
Therefore
$$\{x_1, x_2, x_3, x_1^2\}\otimes x_1 dx_{23}$$
is a basis for $\H^2$ and $h_2 = 4$.

Suppose $b = 2$.  Then a Gr\"obner basis (with monomial order $W$) for $dI_b\wedge\Omega^2 + I_b \Omega^3$ is
$$
\{-4+x_1^2,x_1 x_2-2 x_3,-4+x_2^2,-2 x_2+x_1 x_3,-2 x_1+x_2 x_3,-4+x_3^2\} \otimes dx_{123}.
$$
This implies
$$\Omega_{\M_b}^3 = \C \otimes \{1, x_1, x_2, x_3\} \otimes dx_{123}.$$
Hence no 2-form of degree less than 3 is in $\ker(d_2)$.  Hence
$$\{x_1^2\} \otimes x_1 dx_{23}$$
is a basis for $\H^2$ and $h_2 = 1$.

Theorem~\ref{thm:Groth} only guarantees that the algebraic de Rham cohomologies agree with the smooth de Rham cohomologies.  These two results show that this is also true for these two particular singular spaces (compare \cite{GN1}).

\subsection{What the computer says}
Modern computer algebra has come of age and one may obtain much information directly from packages such as {\em Macaulay2} \cite{GS1}.
Denote by $\H_c^i$ the compact support cohomology of $\M_b$ and $h_c^i = \dim(\H_c^i)$.  Then
\begin{thm}
If $\M_b$ is smooth, then $$h_c^0 = 0, \ \ h_c^1 = 0, \ \ h_c^2 = 5, \ \ h_c^3 = 0, \ \ h_c^4 = 1.$$
\end{thm}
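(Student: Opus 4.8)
The plan is to realize the smooth variety $\M_b$ (so $b \neq \pm 2$) as the complement of a reduced normal crossings anticanonical divisor in a smooth cubic surface, and then extract the compactly supported Betti numbers from the long exact sequence of that pair. Homogenizing Equation~(\ref{eq:onehole}), let
$$
\barM_b \;=\; \{\, x_0(x_1^2+x_2^2+x_3^2) - x_1x_2x_3 = (b+2)\,x_0^3 \,\} \;\subset\; \P^3
$$
be the projective closure of $\M_b \subset \C^3$. Then $\M_b = \barM_b \setminus D$, where $D = \barM_b \cap \{x_0 = 0\} = \{x_0 = 0,\ x_1x_2x_3 = 0\}$ is the triangle of three coplanar lines $L_i = \{x_0 = x_i = 0\}$ lying on $\barM_b$, meeting pairwise and transversally at the three distinct points $[0:1:0:0]$, $[0:0:1:0]$, $[0:0:0:1]$.

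First I would verify that $\barM_b$ is smooth along $\{x_0 = 0\}$ for every $b$: writing $F$ for the cubic form above, on $\{x_0 = 0\}$ one has $\partial_{x_1}F = -x_2x_3$, $\partial_{x_2}F = -x_1x_3$, $\partial_{x_3}F = -x_1x_2$ and $\partial_{x_0}F = x_1^2+x_2^2+x_3^2$, and these cannot vanish simultaneously at a point of $D$. Combined with Proposition~\ref{rem:singular}, this shows that $\M_b$ is smooth precisely when $\barM_b$ is a smooth cubic surface, in which case each $L_i$ is a $(-1)$-curve and $D$ is a reduced normal crossings (anticanonical) divisor.

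Next I would run the long exact sequence for $\M_b = \barM_b \setminus D$ with rational coefficients,
$$
\cdots \;\to\; H^k_c(\M_b) \;\to\; H^k(\barM_b) \;\xrightarrow{\ \rho_k\ }\; H^k(D) \;\to\; H^{k+1}_c(\M_b) \;\to\; \cdots,
$$
using the classical cohomology of a smooth cubic surface ($h^0 = 1$, $h^1 = 0$, $h^2 = 7$, $h^3 = 0$, $h^4 = 1$, since it is $\P^2$ blown up at six points) and of the triangle $D$ (three $\P^1$'s glued cyclically: $h^0 = 1$, $h^1 = 1$, $h^2 = 3$). Two restriction maps control everything: $\rho_0$ is an isomorphism because both spaces are connected, and $\rho_2 \colon H^2(\barM_b) \to H^2(D) \cong \bigoplus_{i=1}^3 H^2(L_i)$ is surjective because the classes $[L_1], [L_2], [L_3] \in H^2(\barM_b)$ restrict through the intersection matrix
$$
\begin{pmatrix} -1 & 1 & 1 \\ 1 & -1 & 1 \\ 1 & 1 & -1 \end{pmatrix},
$$
whose determinant is $4$. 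Reading the sequence off degree by degree then yields $h_c^0 = 0$, $h_c^1 = 0$, $h_c^2 = h^1(D) + \dim\ker(\rho_2) = 1 + (7-3) = 5$, $h_c^3 = \dim\operatorname{coker}(\rho_2) = 0$, and $h_c^4 = h^4(\barM_b) = 1$. (By Poincar\'e duality on the oriented $4$-manifold $\M_b$ this also re-proves $h^2(\M_b) = 5$ of Theorem~\ref{thm:1-hole}; conversely, granting Theorem~\ref{thm:1-hole}, one can shortcut: $h_c^2 = h^2 = 5$ by duality, $h_c^0 = h^4 = 0$ and $h_c^1 = h^3 = 0$ by the Andreotti--Frankel theorem, $h_c^4 = h^0 = 1$ by irreducibility of $\M_b$, and $h_c^3 = h^1 = 0$ follows from $\chi(\M_b) = \chi(\barM_b) - \chi(D) = 9 - 3 = 6 = 1 - h^1 + 5$.)

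The step I expect to be the main obstacle is the smoothness-at-infinity verification and, with it, the assertion that $D$ is exactly the cycle of three lines on $\barM_b$ with no further components or singularities: if $\barM_b$ acquired a singular point on $\{x_0 = 0\}$ the Betti numbers of the compactification would change and the bookkeeping above would collapse. This is an elementary Jacobian (equivalently Gr\"obner) computation, and it fits the theme of this subsection, since the five numbers are also produced directly in \emph{Macaulay2} \cite{GS1} by computing the compactly supported de Rham cohomology of $\M_b$ with $b$ carried as a parameter — an independent check of the geometric argument.
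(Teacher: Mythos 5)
Your argument is correct, and it reaches the stated numbers by a genuinely different route than the paper. The paper's proof is computational: it invokes Alexander duality for the hypersurface $\M_b \subseteq \C^3$, reducing $h_c^i(\M_b)$ to $\H_{dR}^{5-i}(\C^3 \setminus \M_b)$, and then obtains those groups from the Oaku--Takayama $D$-module algorithm in {\em Macaulay2}; no geometry of the boundary enters. You instead pass to the projective closure $\barM_b \subset \P^3$ (the same cubic $\Psi$ the paper introduces a few lines later), check smoothness along the plane at infinity by the elementary Jacobian computation, identify the boundary $D$ as the triangle of three lines, and run the long exact sequence of the pair in compactly supported cohomology, with $h^\bullet(\barM_b) = (1,0,7,0,1)$ supplied by the classical description of a smooth cubic surface as $\P^2$ blown up at six points and the surjectivity of $\rho_2$ forced by the nondegenerate intersection matrix of the three coplanar $(-1)$-curves. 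Your bookkeeping is right: $h_c^2 = h^1(D) + \dim\ker\rho_2 = 1 + 4 = 5$, $h_c^3 = 0$ by surjectivity of $\rho_2$, and the remaining degrees are immediate; the Euler characteristic $9 - 3 = 6$ matches the paper's. What your route buys is independence from software and from the paper's Theorem~\ref{thm:hodge} (indeed it re-derives $h^2(\barM_b)=7$ and the rank-$4$ statement about $\iota^*$ made at the end of that section), at the cost of needing the smoothness-at-infinity and triangle-structure verifications you correctly flagged; what the paper's route buys is uniformity (it needs no structural facts about the compactification and generalizes to situations where the boundary geometry is less transparent), at the cost of being a black-box computation.
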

\begin{proof}
We have $\M_b \subseteq \C^3$ as a subvariety.  Since $\C^3$ is acyclic, Alexander duality gives \cite{Wa1}
$$
\H_c^i(\M_b)^* \cong \H_{dR}^{6-i-1}(\C^3 \setminus \M_b) \ \ \text{ for }  \ i \le 4, \ \ \
\H_c^5(\M_b) = 0.
$$
One then uses {\em Macaulay2} to compute $\H_{dR}^*(\C^3 \setminus \M_b)$ via the Oaku-Takayama algorithm to obtain the above numbers \cite{GS1, OT1}.
\end{proof}

The singularity of $\M_{-2}$ at $(0,0,0)$ is isolated.  Let $\B$ be a small $\epsilon$-ball of $(0,0,0)$.  Then for some $b \in f(\B)$ near but not equal to $-2$, $f_1^{-1}(b)\cap\B$ is homotopic to a bouquet of 2-spheres \cite{Mi1}.  One may apply Schultze's algorithm using the Brieskorn lattice method to compute the monodromy of $f_1|_\B$ around $-2$ \cite{Br1, Sch3}:
\begin{thm}\label{thm:Schultze algorithm}
$f_1^{-1}(b) \cap \B$ consists of one 2-sphere and the monodromy action is the $-1$ map.
\end{thm}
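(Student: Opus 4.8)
The plan is to reduce the statement to the classical local theory of the three-variable ordinary double point. First I would examine the singular point itself. Putting $g := t + 2 = x_1^2 + x_2^2 + x_3^2 - x_1 x_2 x_3 \in \C[\x]$, the expression for $dt$ recorded above shows that the critical locus of $t$ is cut out by $2x_1 = x_2 x_3$, $2x_2 = x_1 x_3$, $2x_3 = x_1 x_2$; solving this system one finds the origin, of critical value $-2$, together with four points all of critical value $2$ --- in agreement with $\psi_1(y) = (y-2)(y+2)$. Hence, after shrinking $\B$, the only critical point of $f_1$ in $\B$ is the origin, and its Hessian is that of the quadratic part $x_1^2 + x_2^2 + x_3^2$ of $g$ (the cubic term contributing nothing to the $2$-jet), which is nondegenerate. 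By the holomorphic Morse lemma there are local analytic coordinates $(w_1, w_2, w_3)$ centered at the origin in which $g = w_1^2 + w_2^2 + w_3^2$, so the singularity is an ordinary ($A_1$) node and the Milnor number $\mu = \dim_\C \C\{x_1,x_2,x_3\}/(\partial_1 g, \partial_2 g, \partial_3 g)$ equals $1$. Consequently the Milnor fibre $f_1^{-1}(b) \cap \B$, for $b$ near but unequal to $-2$, is homotopy equivalent to a single vanishing sphere $S^2$ (sharpening the bouquet statement cited from \cite{Mi1}), diffeomorphic to the disk bundle of $T^*S^2$, with $\H_2(f_1^{-1}(b)\cap\B;\Z) \cong \Z$ generated by the zero section $\delta$. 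This is the first half of the theorem.

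It then remains to identify the monodromy on this rank-one lattice, and here I would use the quasi-homogeneity of the local model, which makes the sign unambiguous. In the coordinates $w$ the polynomial $g = w_1^2 + w_2^2 + w_3^2$ is quasi-homogeneous of degree $2$ with all weights equal to $1$, so the geometric monodromy of its Milnor fibration is represented by the scalar map $w \mapsto -w$. This map preserves the real affine quadric $\{w_1^2 + w_2^2 + w_3^2 = 1\}$, onto which the fibre admits a standard equivariant deformation retraction, and there it restricts to the antipodal map of $S^2$, of degree $(-1)^3 = -1$. Hence the monodromy induces multiplication by $-1$ on $\H_2$ of the Milnor fibre, which is the assertion. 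As an independent check one can instead quote the Picard--Lefschetz formula $h_*(\delta) = \delta - (-1)^{n(n-1)/2}(\delta \cdot \delta)\,\delta$ with $n = 3$ and self-intersection $\delta \cdot \delta = -\chi(S^2) = -2$, again obtaining $h_*(\delta) = -\delta$; and one may verify that Schultze's algorithm applied to the Brieskorn lattice of $g$ returns a single Jordan block with eigenvalue $-1$, consistent with \cite{Br1, Sch3}.

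The one step carrying genuine content is the input to the Morse lemma: that within a sufficiently small $\B$ the critical locus of $f_1$ collapses to the single nondegenerate point at the origin. This is the finite computation indicated above --- separating the fibre $\{t = -2\}$ from the critical points lying over $2$, and checking nonvanishing of the Hessian at the origin. Once that is in place, every remaining step is a citation to the standard theory of the $A_1$ singularity, and the two independent evaluations of the monodromy (the antipodal degree and Picard--Lefschetz) together pin the answer to exactly $-1$, so I anticipate no further obstacle.
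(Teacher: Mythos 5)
Your argument is correct, but it proves the theorem by a genuinely different route than the paper does. The paper treats this as a computer-assisted statement: it invokes Milnor's bouquet theorem \cite{Mi1} for the general shape of $f_1^{-1}(b)\cap\B$ and then obtains both the number of spheres and the monodromy by running Schulze's Brieskorn-lattice algorithm \cite{Br1, Sch3}, as implemented in {\em Singular} \cite{DGPS}; no hand computation of the singularity type is given. You instead identify the singularity directly: the critical-point analysis of $t$ (origin over $-2$, four points over $2$, matching $\psi_1$) localizes the problem, the nondegenerate Hessian of $t+2 = x_1^2+x_2^2+x_3^2-x_1x_2x_3$ shows the origin is an $A_1$ point with Milnor number $1$, so the local fibre is a single vanishing $2$-sphere, and the monodromy $-1$ then follows from the classical local model (quasi-homogeneity giving the antipodal map, cross-checked by Picard--Lefschetz with $\delta\cdot\delta=-2$). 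Your approach is more conceptual and verifiable without software, and it explains \emph{why} the answer is $-1$; the paper's approach requires no normal-form analysis and would apply unchanged to a worse isolated singularity (and is in keeping with the paper's theme of exploiting computer algebra), but it yields the result only as algorithm output. Both establish exactly the stated local statement about $f_1|_\B$ around $-2$, so there is no gap---just be aware that the burden in your version is the elementary but essential check that the origin is the only critical point in $\B$ and that the Hessian there is nondegenerate, which you do carry out.
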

This means that if one goes around a small loop around $-2 \in \C$, the monodromy action on $f_1^{-1}(b)\cap \B$ is the antipodal map on the small sphere in $f_1^{-1}(b) \cap \B$ around $(0,0,0)$.  
Schultz implemented his algorithm in {\em Singular} \cite{DGPS}.  Notice also that this monodromy action does not arise from a Dehn twist action on $\Sigma_{1,1}$ because any Dehn twist induced monodromy action is the identity on the above $2$-sphere \cite{GN1}.

There is a natural compactification via the projectivization of $\M_b$ and much more information may be obtained from this projectivized object.
Consider the projective surface defined by the homogeneous polynomial
$$
\Psi(X_1,X_2,X_3,V) = V(X_1^2+X_2^2 + X_3^2) - X_1 X_2 X_3 - V^3(2+b).
$$
Then $\Psi$ defines the (projective) compactification $\bar{\M}_b \subseteq \P^3$.
A direct calculation shows that $\Psi$ is irreducible and that $\barM_b$ is smooth if and only if $\psi_1(b) \neq 0$.  We assume this is the case for the rest of this section.
Geometrically, $\bar{\M}_b \setminus \M_b$ consists of three copies of $\P^1$ defined by the equation $V=0$, pairwise intersecting at a point (with a total of three points of intersections).
{\em Macaulay2} gives us
\begin{thm}\label{thm:hodge}
The non-zero Hodge numbers of $\bar{\M_b}$ are
$$
h^{0,0}(\bar{\M_b}) = h^{2,2}(\bar{\M_b}) = 1, \ \ \  h^{1,1}(\bar{\M_b}) =7.
$$
\end{thm}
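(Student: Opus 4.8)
The plan is to identify $\barM_b$ as a smooth cubic surface in $\P^3$ and then read off its Hodge numbers from standard facts about projective hypersurfaces. By construction $\Psi$ is homogeneous of degree $3$, and by the facts quoted just above the statement the surface $X := \barM_b$ is smooth and irreducible whenever $\psi_1(b) \neq 0$, which we are assuming; thus $X$ is a smooth cubic surface in $\P^3$.

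First I would determine the off-diagonal Hodge numbers. The Lefschetz hyperplane theorem gives $\H^0(X) = \C$ and $\H^1(X) = 0$, and Poincar\'e duality then gives $\H^3(X) = 0$ and $\H^4(X) = \C$; in particular $b_1 = b_3 = 0$, so $h^{1,0} = h^{0,1} = 0$. Next, adjunction for a degree-$3$ surface in $\P^3$ gives $\omega_X \cong \Os_X(3-3-1) = \Os_X(-1)$, which has no global sections, so $h^{2,0} = h^{0,2} = h^0(X,\omega_X) = 0$. Finally Hodge symmetry $h^{2,1} = h^{1,2}$ together with $b_3 = h^{2,1} + h^{1,2} = 0$ forces $h^{2,1} = h^{1,2} = 0$. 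Hence the only Hodge numbers that can be nonzero are $h^{0,0} = h^{2,2} = 1$ (the former by connectedness, the latter by duality) and $h^{1,1}$, and moreover $h^{1,1} = b_2$ since $h^{2,0} = h^{0,2} = 0$.

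It then remains to compute $b_2$, equivalently the topological Euler characteristic $\chi(X) = 2 + b_2$. I would obtain $\chi(X)$ from the normal bundle sequence $0 \to T_X \to T_{\P^3}|_X \to \Os_X(3) \to 0$: from $c(T_{\P^3}) = (1+H)^4$ one gets $c_2(T_X) = (6 - 4 \cdot 3 + 3^2)\,H^2|_X = 3\,H^2|_X$, whence $\chi(X) = \int_X c_2(T_X) = 3 \deg X = 9$ and so $b_2 = 7$. Equivalently, Noether's formula $12\,\chi(\Os_X) = K_X^2 + c_2(X)$ with $\chi(\Os_X) = 1 - h^{1,0} + h^{2,0} = 1$ and $K_X^2 = \deg X = 3$ again gives $c_2(X) = 9$. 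Therefore $h^{1,1}(X) = 7$, which together with the vanishings above is the assertion.

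I do not expect a genuine obstacle here: the whole argument is standard once smoothness and irreducibility of $\Psi$ are granted, and these are precisely the transversality facts recorded just before the statement (checkable by an elimination computation of the type used in Proposition~\ref{rem:singular}). The one point requiring a little care is that $\barM_b$ be smooth along the hyperplane at infinity $\{V = 0\}$, not merely on the affine chart $\M_b$; this is subsumed in the quoted claim that $\barM_b$ is smooth if and only if $\psi_1(b) \neq 0$. One could alternatively bypass the computation entirely by invoking the classification of smooth cubic surfaces as $\P^2$ blown up at six points in general position, yielding Picard rank $7$ and geometric genus $0$ directly, or simply confirm the numbers in {\em Macaulay2} as the authors do.
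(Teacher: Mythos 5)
Your argument is correct, but it proves the theorem by a genuinely different route than the paper: the paper's ``proof'' is purely computational --- it simply records the output of a \emph{Macaulay2} Hodge-number computation for the cubic surface defined by $\Psi$ --- whereas you derive the numbers classically from the fact that $\barM_b$ is a smooth cubic surface in $\P^3$. Your chain of standard facts is sound: Lefschetz plus Poincar\'e duality kill $b_1$ and $b_3$; adjunction gives $\omega_X \cong \Os_X(-1)$, hence $h^{2,0}=h^{0,2}=0$ and $h^{1,1}=b_2$; and either the Chern-class computation $c_2(T_X)=(6-4d+d^2)H^2$ with $d=3$ or Noether's formula gives $\chi_{\mathrm{top}}=9$, so $b_2=7$ (equivalently, a smooth cubic is $\P^2$ blown up at six points, Picard rank $7$). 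The one delicate point you rightly flag --- smoothness along $\{V=0\}$ --- is indeed covered by the paper's assertion that $\barM_b$ is smooth iff $\psi_1(b)\neq 0$; in fact a direct check of the partials of $\Psi$ shows there are never singular points at infinity, so all singularities live in the affine chart $\M_b$, consistent with that assertion. What your approach buys is a conceptual, software-independent explanation of \emph{why} $h^{1,1}=7$ (it is just the second Betti number of a del Pezzo surface of degree $3$), valid uniformly for all $b$ with $\psi_1(b)\neq 0$ rather than verified instance by instance; what the paper's approach buys is consistency with its overall computational theme and a check that requires no appeal to the classical theory of cubic surfaces.
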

\begin{cor}\label{cor:hodge}
The Betti numbers of $\bar{\M_b}$ are
$$
h^0(\bar{\M_b}) = h^4(\bar{\M_b}) = 1, \ \ \  h^2(\bar{\M_b}) =7.
$$
\end{cor}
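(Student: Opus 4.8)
The plan is to read off the Betti numbers as a formal consequence of the Hodge numbers recorded in Theorem~\ref{thm:hodge}. Recall that $\barM_b$ has already been shown to be a smooth, irreducible projective surface over $\C$ whenever $\psi_1(b) \neq 0$, hence in particular a compact Kähler manifold of complex dimension $2$. Its singular cohomology therefore carries the Hodge decomposition
\[
\H^k(\barM_b, \C) \;\cong\; \bigoplus_{p+q=k} \H^q(\barM_b, \Omega^p_{\barM_b}),
\]
so that $h^k(\barM_b) = \sum_{p+q=k} h^{p,q}(\barM_b)$ for every $k$.

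Next I would substitute the data of Theorem~\ref{thm:hodge}, namely $h^{0,0} = h^{2,2} = 1$, $h^{1,1} = 7$, and $h^{p,q} = 0$ for all other pairs $(p,q)$. The sum then collapses one degree at a time: $h^0 = h^{0,0} = 1$; $h^1 = h^{1,0} + h^{0,1} = 0$; $h^2 = h^{2,0} + h^{1,1} + h^{0,2} = 7$; $h^3 = h^{2,1} + h^{1,2} = 0$; and $h^4 = h^{2,2} = 1$. This is exactly the asserted list of Betti numbers, with $h^1 = h^3 = 0$ appearing as a by-product, so no further argument is needed.

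I do not expect any genuine obstacle here: all the content sits in the computer-assisted Theorem~\ref{thm:hodge}, and the corollary is pure linear bookkeeping via the Hodge decomposition for smooth projective varieties. Granting the Macaulay2 output, the only thing I would add is an internal consistency check --- Poincaré duality forces $h^0 = h^4$ and $h^1 = h^3$, which the answer respects, and the topological Euler characteristic $\chi(\barM_b) = 1 - 0 + 7 - 0 + 1 = 9$ agrees with the value obtained from the stratification of $\barM_b$ into $\M_b$ and the boundary triangle of three $\P^1$'s (of Euler characteristic $3$), using $h^2(\M_b) = 5$ from Theorem~\ref{thm:1-hole}. Such a cross-check is optional, but it is cheap and guards against a transcription error in the machine computation.
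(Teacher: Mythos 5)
Your proposal is correct and matches the paper's (implicit) argument: the corollary is stated without a separate proof precisely because, for the smooth projective surface $\bar{\M}_b$, the Betti numbers are the sums of the Hodge numbers from Theorem~\ref{thm:hodge}, which is exactly the bookkeeping you perform. The Poincar\'e duality and Euler characteristic cross-checks are a sensible optional addition but not needed.
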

Notice that one may obtain the Betti numbers using the algebraic de Rham complex \cite{Wa1}.
\subsection{The groups $\H^0$ and $\H^1$}
The algebraic de Rham cohomology satisfies many of the usual cohomological axioms.  There is an excision sequence [Theorem 3.3, \cite{Ha1}]:
\begin{prop}[Excision]\label{prop:excision}
Suppose $\UU$ is smooth and $\VV \subseteq \UU$ a smooth subvariety of codimension $r$. Let $\WW = \UU \setminus \VV$. Then there is an exact sequence
\begin{equation*}
\xymatrix{
\cdots \ar[r] & \H^{i-2r}(\VV) \ar[r] & \H^i(\UU) \ar[r] & \H^i(\WW) \ar[r] & \H^{i-2r+1}(\VV) \ar[r] & \cdots\\
}
\end{equation*}
\end{prop}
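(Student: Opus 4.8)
The statement is the algebraic de Rham analogue of the topological Gysin sequence, so the plan is to manufacture it from two ingredients: a long exact sequence of a pair for de Rham cohomology \emph{with supports}, and a purity (Thom) isomorphism identifying cohomology with supports along $\VV$ with the cohomology of $\VV$ shifted by twice the codimension. This is precisely the content organized in [Theorem 3.3, \cite{Ha1}], and I would follow that route.

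First I would introduce algebraic de Rham cohomology with supports in $\VV$. Writing $j : \WW \hookrightarrow \UU$ for the open immersion, the complex $R\Gamma_\VV(\Omega_\UU^\b)$ of sections supported on $\VV$ sits in a distinguished triangle $R\Gamma_\VV(\Omega_\UU^\b) \to \Omega_\UU^\b \to Rj_*(\Omega_\WW^\b) \to [1]$. Taking hypercohomology yields the long exact sequence
$$
\cdots \to \H^i_\VV(\UU) \to \H^i(\UU) \to \H^i(\WW) \to \H^{i+1}_\VV(\UU) \to \cdots ,
$$
where $\H^i_\VV(\UU)$ denotes the hypercohomology of $R\Gamma_\VV(\Omega_\UU^\b)$. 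When $\UU$ and $\WW$ are affine, Corollary~\ref{cor:hyper} lets one replace the outer two terms by the cohomology of honest complexes of global sections and read off the connecting maps directly; no subtlety arises here beyond bookkeeping.

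Second --- and this is the heart of the matter --- I would establish the purity isomorphism $\H^i_\VV(\UU) \cong \H^{i-2r}(\VV)$. The idea is to reduce to a local model: since $\VV \subseteq \UU$ is smooth closed of codimension $r$, \'etale-locally (or after completion along $\VV$) the pair $(\UU,\VV)$ looks like $(\VV \times \A^r,\ \VV \times \{0\})$, i.e. $\VV$ is the zero section of its normal bundle. One then computes directly the de Rham complex of $\A^r$ with support at the origin: its hypercohomology is acyclic except in degree $2r$, where it is one-dimensional. This is the algebraic incarnation of the Thom class, and the shift by $2r$ is exactly what produces the exponent $i-2r$. Globalizing from this local computation requires a Mayer--Vietoris / spectral-sequence patching argument --- or, alternatively, invoking the comparison with the analytic Gysin sequence together with Theorem~\ref{thm:Groth}. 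Checking that the local identifications glue to a well-defined global isomorphism independent of the chosen local coordinates is the step I expect to be the main obstacle.

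Finally, splicing the purity isomorphism into the long exact sequence of the pair --- replacing each $\H^{i+1}_\VV(\UU)$ by $\H^{i+1-2r}(\VV)$ and reindexing --- produces exactly the asserted sequence
$$
\cdots \to \H^{i-2r}(\VV) \to \H^i(\UU) \to \H^i(\WW) \to \H^{i-2r+1}(\VV) \to \cdots .
$$
In every application in this paper the relevant spaces ($\M_b$, together with the smooth subvarieties and compactifications considered) are smooth, so the purity hypothesis is met and no extra conditions are needed.
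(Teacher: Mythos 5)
Your proposal is correct and is essentially the paper's approach: the paper gives no independent argument but simply quotes this as [Theorem 3.3, \cite{Ha1}], and your outline (long exact sequence for cohomology with supports plus the purity/Thom isomorphism $\H^i_\VV(\UU)\cong\H^{i-2r}(\VV)$) is exactly the argument underlying that cited theorem of Hartshorne.
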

\begin{cor}\label{cor:euler}
The Euler characteristics are additive:
$$
\chi(\UU) = \chi(\VV) + \chi(\WW).
$$
\end{cor}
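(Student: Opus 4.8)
The plan is to obtain the identity as a formal consequence of the excision long exact sequence of Proposition~\ref{prop:excision}, using the standard principle that the alternating sum of dimensions around a bounded exact sequence of finite-dimensional vector spaces vanishes.

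First I would check that $\chi(\UU)$, $\chi(\VV)$ and $\chi(\WW)$ are well-defined. By Grothendieck's theorem (Theorem~\ref{thm:Groth}) the algebraic de Rham cohomology groups coincide with the ordinary (smooth) de Rham cohomology of finite-type complex varieties, so each $\H^i$ is finite-dimensional and vanishes outside a bounded range of degrees. Consequently the long exact sequence of Proposition~\ref{prop:excision} has only finitely many nonzero terms, and the alternating sum of the dimensions of its terms is zero.

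Next I would organize the index bookkeeping. The sequence runs through repeating triples $\H^{i-2r}(\VV) \to \H^i(\UU) \to \H^i(\WW)$, and the connecting homomorphism $\H^i(\WW) \to \H^{i-2r+1}(\VV)$ feeds precisely into the next triple, since $i-2r+1 = (i+1)-2r$; thus the displayed sequence is a single honest long exact complex. Assigning alternating signs $(-1)^k$ to the $k$-th entry, the $\VV$-contributions and the $\WW$-contributions carry a common sign while the $\UU$-contributions carry the opposite sign. Reindexing the $\VV$-sum by $j = i-2r$ shifts the summation index by the even integer $2r$ and so leaves the sign pattern unchanged, whereupon the vanishing of the alternating total collapses to $\chi(\VV) - \chi(\UU) + \chi(\WW) = 0$, that is, $\chi(\UU) = \chi(\VV) + \chi(\WW)$.

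I do not anticipate a genuine obstacle here: the only points requiring care are the finiteness remark that makes the Euler characteristics meaningful and the sign-and-index accounting in the alternating sum, both of which are routine once the excision sequence of Proposition~\ref{prop:excision} is in hand.
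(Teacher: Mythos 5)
Your argument is correct and is exactly the intended one: the paper states Corollary~\ref{cor:euler} as an immediate consequence of the excision sequence of Proposition~\ref{prop:excision}, with the standard alternating-sum-of-dimensions argument (finiteness and boundedness of the cohomology plus the even shift $2r$ leaving the sign pattern intact) left implicit. Your sign and index bookkeeping, giving $\chi(\VV)-\chi(\UU)+\chi(\WW)=0$, matches what the paper relies on.
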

\begin{cor}
If $\psi_1(b) \neq 0$, then
$h^0 = 1$ and $h^1 = 0$.
\end{cor}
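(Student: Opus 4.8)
The plan is to get $h^0=1$ directly from connectedness, and then to pin down $h^1$ through an Euler characteristic count carried out against the projective compactification $\barM_b$.

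First I would settle $h^0$. Under the standing hypothesis $\psi_1(b)\neq 0$ the homogeneous polynomial $\Psi$ is irreducible, so $\barM_b$ — and hence its dense affine open subset $\M_b$ — is irreducible, in particular connected. Since $\H^0=\ker\bigl(d_0\colon\Os_{\M_b}\to\Omega^1_{\M_b}\bigr)$ consists of the regular functions on $\M_b$ annihilated by $d$, and such a function is locally constant on the smooth connected variety $\M_b$, this forces $\H^0\cong\C$, i.e. $h^0=1$.

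Next I would reduce the computation of $h^1$ to an Euler characteristic. By Grothendieck's Theorem~\ref{thm:Groth}, $h^i$ is the $i$-th Betti number of the smooth $4$-manifold $\M_b$, so $\chi(\M_b)=\sum_i(-1)^i h^i$. As $\M_b$ is smooth affine of complex dimension $2$, it has the homotopy type of a CW complex of real dimension at most $2$ (Andreotti--Frankel), whence $h^3=h^4=0$; equivalently $h^3=h_c^1=0$ and $h^4=h_c^0=0$ by Poincar\'e duality and the compact-support computation established above. Together with $h^2=5$ from Theorem~\ref{thm:1-hole} and $h^0=1$, this gives $\chi(\M_b)=6-h^1$, so it suffices to show $\chi(\M_b)=6$.

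Finally I would compute $\chi(\M_b)$ via additivity of the Euler characteristic (the general additivity over constructible decompositions, of which Corollary~\ref{cor:euler} is the smooth case) applied to $\barM_b=\M_b\sqcup D$ with $D:=\barM_b\setminus\M_b$: $\chi(\M_b)=\chi(\barM_b)-\chi(D)$. By Corollary~\ref{cor:hodge} one has $\chi(\barM_b)=1+7+1=9$, and since $D$ is the union of three copies of $\P^1$ meeting pairwise in three distinct points, inclusion--exclusion gives $\chi(D)=3\cdot 2-3\cdot 1=3$; hence $\chi(\M_b)=6$ and $h^1=0$. Alternatively one reads off $\chi(\M_b)=\chi_c(\M_b)=0-0+5-0+1=6$ straight from the compact-support numbers with no reference to $\barM_b$, and in fact the whole statement drops out in one stroke from Poincar\'e duality $h^i=h_c^{4-i}$, giving $h^0=h_c^4=1$ and $h^1=h_c^3=0$ at once. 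So the statement is essentially a matter of assembling results already in hand; I expect the only slightly delicate points to be the two small bookkeeping computations ($h^3=h^4=0$ and $\chi(D)=3$) and the choice of which duality/additivity statement to lean on, with no real obstacle.
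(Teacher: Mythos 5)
Your proposal is correct, but it diverges from the paper's argument in two places worth noting. For $h^0$ the paper does not use connectedness at all: it runs the excision sequence of Proposition~\ref{prop:excision} iteratively along a stratification of the locus at infinity into smooth pieces ($\VV_1=\P^1$, $\VV_2=\C$, $\VV_3=\C\setminus\{0\}$), getting $\H^0(\UU_i)\cong\H^0(\UU_{i+1})$ at each step and hence $h^0(\M_b)=h^0(\barM_b)=1$; your route via irreducibility of $\Psi$, density of $\M_b$ in $\barM_b$, and the fact that a $d$-closed regular function on a connected smooth variety is constant is more elementary and equally valid, since the paper has already recorded that $\Psi$ is irreducible. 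For $h^1$ your main computation is essentially the paper's: $\chi(\barM_b)=9$ from Corollary~\ref{cor:hodge}, Euler characteristic $3$ for the divisor at infinity, hence $\chi(\M_b)=6$ and $h^1=0$ given $h^0=1$, $h^2=5$. The one caveat is that Corollary~\ref{cor:euler} is stated only for removing a \emph{smooth} subvariety, whereas your decomposition $\barM_b=\M_b\sqcup D$ has $D$ nodal; you patch this by invoking general constructible additivity (true, but not proved in the paper), while the paper stays within its own toolkit by stratifying $D$ into the three smooth pieces above — numerically your inclusion--exclusion $3\cdot 2-3=3$ is the same count as the paper's $2+1+0$. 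Two further small remarks: your appeal to Andreotti--Frankel for $h^3=h^4=0$ is unnecessary, since $\M_b$ is smooth affine of dimension $2$ and so $\Omega^i_{\M_b}=0$ for $i>2$ already in the algebraic complex; and your one-stroke alternative via Poincar\'e duality $h^i=h_c^{4-i}$ is legitimate but rests on the computer-computed compact-support Betti numbers (and on the Grothendieck comparison), so it trades the paper's excision bookkeeping for reliance on that earlier machine-assisted theorem.
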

\begin{proof}
The locus at infinity defined by $V = 0$ consists of three $\P^1$'s pairwise intersecting at one point.  Let $\VV$ be the disjoint union:
$$
\VV = \VV_1 \cup \VV_2 \cup \VV_3, \ \ \text{ where } \VV_1 = \P^1,  \VV_2= \C, \VV_3 = \C \setminus \{0\},
$$
each of which has codimension $1$ in $\bar{\M}_b$.
Let $\UU_0 = \bar{\M}_b$ and $\UU_{i+1} = \UU_i \setminus \VV_{i+1}$ for $0\le i \le 2$.
Notice that $\M_b = \UU_3$.
Proposition~\ref{prop:excision} gives the following exact sequence:
\begin{equation*}
\xymatrix{
0 \ar[r] & \H^0(\UU_i) \ar[r] & \H^0(\UU_{i+1}) \ar[r] & 0\\
}
\end{equation*}
Hence $h^0(\UU_i) = 1$ for $0 \le i \le 3$.

By Corollary~\ref{cor:euler},
$$
\chi(\bar{\M}_b) = \chi(\M_b) + \sum_{i=1}^3 \chi(\VV_i).
$$
By Corollary~\ref{cor:hodge}, $\chi(\bar{\M}_b) = 9$.  Hence
$$
\chi(\M_b) = \chi(\bar{\M}_b) - \sum_{i=1}^3 \chi(\VV_i) = 9 - (2 + 1 + 0) = 6.
$$
By Theorem~\ref{thm:1-hole},  $h^2(\M_b) = 5$.  Since $h^0(\M_b) = 1$,
$$
h^1(\M_b) = (5+1) - \chi(\M_b)  = 0.
$$
\end{proof}
\begin{rem}
All these calculations are done in the algebraic category and these results parallel those in \cite{GN1}.
\end{rem}

\subsection{The Gau\ss-Manin connection}
Recall the map $f_1 : \M \lto \C$ corresponding to the ring morphism
$$
f_1^* : \C[y] \lto \C[\x], \ \ \ y \mapsto t.
$$

$f_1$ is not smooth; however, it becomes smooth when the fibres over $\Spec(\C[y]/(\psi_1(y)))$ are removed.
Let $X$ and $Y$ be the respective localizations defined by
$$
\Os_X = \C[\x]_{f_1^*(\psi_1(y))}, \ \ \ \Os_Y = \C[y]_{\psi_1(y)}.
$$
Then $f_1 : X \lto Y$ is smooth.  The Gau\ss-Manin connection on $\mH^2$ is
$$
\nb : \mH^2 \lto \Omega_Y^1 \otimes_{f_1^*} \mH^2.
$$
Recall our choice of a basis for $\H^2$:
\begin{eqnarray*}
B & = & \{1, x_1, x_2, x_3, x_1^2\} \otimes x_1 dx_{23}\\
dB & = & \{1, 2x_1, x_2, x_3, 3x_3^2\}\otimes dx_{123}.
\end{eqnarray*}
For each $b \in \C$ with $\psi_1(b) \neq 0$, $P = (y - b)$ is a maximal prime of $\Os_Y$.  Let
$
\Phi_1 : Y_P \lto Y
$
be the localization map.
By Proposition~\ref{prop:localization},
$$
\Phi_1^*(\mH^2) \stackrel{\cong}{\lto} R^2 (f_1 \circ \Phi_1)_*(\Omega_{\Phi_1^*(X)/Y_P}).
$$
Let $\phi_1 : \Spec(k(P)) \lto Y_P.$
Then the following diagram
\begin{equation}
\xymatrix{
\Omega_{Y_P}^i \ar[r]^d \ar[d]^{\phi_1^*} & \Omega_{Y_P}^{i+1} \ar[d]^{\phi_1^*} \\
\Omega_{\M_b}^i \ar[r]^d & \Omega_{\M_b}^{i+1}
}
\end{equation}
commutes.  By Nakayama lemma, $B$ generates $\Phi_1^*(\mH^2)$.
Since this is true for every maximal $P$, by the local to global principle [Corollary 2.9, \cite{Ei1}],
$B$ generates $\mH^2$.  Hence $B$ serves as a basis for $\mH^2$.

Let $u \in \mH^2$.
Then $du$ is of the form $w \wedge dt$ for some $w \in \Omega_X^2$ and $\nb(u) = w \otimes_{f_1^*} dy$.  With the global basis $B$, we may write
$$\nb = d + E(t)\otimes_{f_1^*} dy,$$
where $d$ is the exterior differential operator of $(\Omega_Y^\bullet, d)$.

We now factor each element in $dB$ as a product of $w \wedge dt$ and write $w$ as a linear combination of basis elements in $B$.  This is carried out with the help of {\em Macaulay2} \cite{GS1}.  Let
$$
\eta = \frac{(t-2) x_1 dx_{23} +  x_3 (x_3^2-4)dx_{12} + (2 x_1 x_3 + 2x_2 - t x_2 - x_2 x_3^2)dx_{13}}{2 (t^2-4)}.
$$
Then $dx_{123} = \eta \wedge dt$.  Hence
$$
\left\{
\begin{array}{ccc}
d (x_1 dx_{23})  & = & \eta \wedge dt \\
d (x_1^2 dx_{23})  & = & 2 x_1 \eta \wedge dt \\
d (x_1^3 dx_{23})  & = & 3 x_1^2 \eta \wedge dt \\
d (x_1 x_2 dx_{23})  & = & x_2 \eta \wedge dt \\
d (x_1 x_3 dx_{23})  & = & x_3 \eta \wedge dt.
\end{array}
\right.
$$
We need to write the elements in the set $$D = \{1, 2x_1, 3x_1^2, x_2, x_3\} \otimes \eta$$ as linear combinations of elements in $B$.  Let $b \in \C \setminus \{\pm 2\}$ and consider
$$D \subseteq \mH^2 \otimes_{f_1^*} (\Os_{Y} /(y-b)) \cong \H^2.$$
Now we apply the algorithm in the proof of Theorem~\ref{thm:1-hole} to $D$.  This results in
$$
\left\{
\begin{array}{ccc}
\eta  & \sim & (\frac{9 x_1-x_1^3}{6 (b+2)}+\frac{-3 x_1+x_1^3}{6 (b-2)})dx_{23}\\
2 x_1 \eta  & \sim & \frac{3 x_1^2}{2 (b-2)} dx_{23}\\
3x_1^2 \eta  & \sim & \frac{-6x_1+2x_1^3}{b-2}dx_{23}\\
x_2 \eta & \sim & \frac{3 x_1 x_2}{2 (b-2)} dx_{23}\\
x_3 \eta  & \sim & \frac{3 x_1 x_3}{2 (b-2)} dx_{23}.
\end{array}
\right.
$$

Since the Jacobson radical of $\Os_Y$ is $\{0\}$, $E(t)$ with respect to the basis $B$ is
$$
E(t) =
\left(
\begin{array}{ccccc}
 \frac{3}{2 (t+2)}+\frac{-1}{2 (t-2)} & 0 & 0 & 0 & \frac{-1}{6 (t+2)}+\frac{1}{6 (t-2)}\\
 0 & \frac{3}{2(t-2)}  & 0 & 0 & 0\\
 0 & 0 &  \frac{3}{2(t-2)} & 0 & 0\\
 0 & 0 & 0 &  \frac{3}{2(t-2)} & 0\\
  \frac{-6}{t-2} & 0 & 0 &  0 & \frac{2}{t-2}
\end{array}
\right).
$$
Notice the $t+2$ and $t-2$ terms in the denominators.  These are the singular values around which the monodromy of $\nb$ is not trivial.
From $E(t)$, we see that $\nabla$ is a direct sum of three rank-1 systems and one rank-2 system.  Denote by $\D$ the Gau\ss-Manin connection for the rank-2 subsystem.
$Y = \C \setminus \{-2, 2\}$ is the three-holed sphere and
$$
\D = d + (\frac{A_2}{y-2} +  \frac{A_{-2}}{y+2}) dy,
$$
where
$$
A_2 = \left(
\begin{array}{cc}
 -\frac{1}{2} &  \frac{1}{6}\\
   -6 & 2
\end{array}
\right), \ \ \
A_{-2} =
\left(
\begin{array}{cc}
 \frac{3}{2} & -\frac{1}{6}\\
   0 & 0
\end{array}
\right).
$$
The exponential matrix at infinity is then
$$
A_\infty = - (A_2 + A_{-2}) =
\left(
\begin{array}{cc}
-1 & 0\\
 6 & -2
\end{array}
\right).
$$
The eigenvalues of $A_2$ and $A_{-2}$ are $0$ and $\frac{3}{2}$.  The eigenvalues of $A_\infty$ are $-1$ and $-2$.  Since the difference of the two eigenvalues of $A_\infty$ is a non-zero integer, one must take special care to compute the monodromy at $\infty$.  We make a change of variable $y \to \frac{1}{z}$.  Then
$$
\D = d + \frac{\sA(z)}{z} dz, \ \ \ \sA(z) = -(\frac{A_2}{(1-2z)} +  \frac{A_{-2}}{(1+2z)}).
$$
Now we follow [\S 6, \cite{Ma1}] to compute the monodromy at $z=0$.  First we compute the Taylor series of $\sA$ at $z=0$.  This gives us $\sA(0) = A_\infty$ and
$$
\frac{d \sA}{dz}(0) = 2A_{-2} - 2A_2 =
\left(
\begin{array}{cc}
4 & -\frac{2}{3}\\
12 & -4
\end{array}
\right).
$$
Following [\S 6, \cite{Ma1}], we set
$$
\varphi =
\left(
\begin{array}{cc}
-2 & -\frac{2}{3}\\
 0 & -2
\end{array}
\right).
$$
Then the monodromy at $z = 0$ is
$$
\N_\infty = e^{-2 \pi i \varphi} =
\left(
\begin{array}{cc}
1 & \frac{4 \pi i}{3}\\
 0 & 1
\end{array}
\right).
$$
The classical result of Riemann says that the global monodromy is determined by the local ones at the three punctures \cite{Ka1}.
A direct computation then shows that the monodromy group of this rank-2 subsystem is generated by the following elements
$$
\N_{-2} = \left(
\begin{array}{cc}
1 & 0 \\
 0 & -1
\end{array}
\right), \ \ \
\N_2 =
\left(
\begin{array}{cc}
1 & -\frac{4 \pi i}{3} \\
 0 & -1
\end{array}
\right), \ \ \
\N_\infty =
\left(
\begin{array}{cc}
1 & \frac{4 \pi i}{3}\\
 0 & 1
\end{array}
\right).
$$


Notice that $\M_b$ is not projective.
Moreover the locus at infinity of $\bar{\M}_b$ consists of three copies of $\P^1$, pairwise intersecting at one point.
The long exact sequence in Proposition~\ref{prop:excision}  then shows that the map $\iota^* : \H^2(\bar{\M}_b) \lto \H^2(\M_b)$ has rank equal to 4, where $\iota : \M_b \lto \bar{\M}_b$ is the inclusion.

\section{The representation varieties of the four-holed sphere}\label{sec:4-hole}
This section computes the cohomologies $\H^\bullet$ of the smooth $\SL(2,\C)$-representation varieties of a four-holed sphere.  Let $g = 0, m = 4$.
Then the fundamental group $\pi$ is isomorphic to $\F_3$, the free group of three generators \cite{Go0}.  Again we rename the variables in Section~\ref{subsec:3}:
Let $\x = \{x_1, x_2, x_3 \}$ such that
$$
x_1 = z_{12}, \ \ x_2 = z_{13}, \ \ x_3 = z_{23}.
$$

The four punctures correspond to
$$F_1, F_2, F_3, F_4 := (F_1 F_2 F_3)^{-1}.$$
For
$\rho \in \Hom(\F_3,G)$, let $\bt = \{t_1, t_2, t_3, t_4\}$ with
$$
t_i = \tr(\rho(F_i)), \ \ 1 \le i \le 4.
$$
With these new notations, let
\begin{eqnarray*}
u_4 = u_4(\bt) & = & 4-t_1^2-t_2^2-t_3^2-t_1 t_2 t_3 t_4-t_4^2+t_1 t_2 x_1+ \\
& & t_3 t_4 x_1-x_1^2+t_1 t_3 x_2+t_2 t_4 x_2-x_2^2+ \\
& &t_2 t_3 x_3+t_1 t_4 x_3-x_1 x_2 x_3-x_3^2.
\end{eqnarray*}
Then $\M = \Spec(\C[\x,\bt]/(u_4))$.  Let $\y = \{y_1,y_2,y_3,y_4\}$.   Then we have a morphism
$$
f_4 : \M \lto \Spec(\C[\y])
$$
induced by the ring homomorphism
$$
f_4^* : \C[\y] \lto \C[\x, \bt]/(u_4), \ \ \  f_4^*(y_i) = t_i.
$$

For a fixed element $\bb = (b_1, b_2, b_3, b_4) \in \C^4$, representing the (fixed) monodromies at the punctures, we rename $\M_C$ as $\M_\bb$.  Then $\M_\bb$ is defined by the ideal
$$I_\bb = (t_1 - b_1, t_2 - b_2, t_3 - b_3, t_4 - b_4, u_4) \subseteq \C[\x,\bt].$$
\begin{rem}
$\M_\bb$ is a subvariety of $\C^3$ defined by the principal ideal $$L_\bb := (u_4(\bb)) = I_\bb \cap \C[\x] \subseteq \C[\x]$$ for a fixed $\bb \in \C^4$.  Let $\Os = \C[\x]$.
\end{rem}
Introducing the symmetric coordinates, let $$\s = \{s_1, s_2, s_3, s_4\}$$ be the elementary symmetric polynomials in $\C[\y]$, i.e.
$$
s_i = \sum_{|a|=i, a_j \le 1} \y^a, \ \ \ 1 \le i \le 4.
$$
Let
\begin{eqnarray*}
\Delta(\y) & = &  (y_1^4+y_2^4+y_3^4+y_4^4)  -2 (y_1^2 y_2^2 + y_1^2 y_3^2+ y_1^2 y_4^2+y_2^2 y_3^2+ y_2^2 y_4^2)+8 y_1 y_2 y_3 y_4 +\\
& & (y_1^2 y_2^2 y_3^2 +y_1^2 y_3^2 y_4^2+y_2^2 y_3^2 y_4^2)-(y_1^3 y_2 y_3 y_4 +y_1 y_2^3 y_3 y_4 + y_1 y_2 y_3^3 y_4+y_1 y_2 y_3 y_4^3)\\
& = & s_1^4 - (4 s_1^2 s_2 + s_1^2 s_4 + s_4 s_1^2) + (8s_1 s_3 + s_3^2).
\end{eqnarray*}
Let $\psi_4(\y) \in \C[\s] \subseteq \C[\y]$ be the symmetric polynomial
$$
\psi_4(\y) = \Delta(\y)^2 \prod_{i=1}^4 (y_i^2-4).
$$
\begin{thm}
The singularity locus is defined by the symmetric polynomial $\psi_4$.
This is to say that $\M_\bb$ is singular if and only if $\psi_4(\bb) = 0$.
\end{thm}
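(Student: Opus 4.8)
The plan is to follow the pattern of Proposition~\ref{rem:singular}: reduce smoothness of $\M_\bb$ to an ideal-membership question via Proposition~\ref{prop:singular}, settle it for all parameters at once by an elimination computation, and then pin down the reverse implication by exhibiting the singular points explicitly. Fix $\bb\in\C^4$. By the remark preceding the theorem, $\M_\bb\subseteq\C^3$ is the hypersurface $\Spec(\C[\x]/L_\bb)$ with $L_\bb=(u_4(\bb))$, and it is a genuine codimension-one subvariety because, as a polynomial in $\x$, $u_4(\bb)$ always has the nonzero cubic part $-x_1x_2x_3-x_1^2-x_2^2-x_3^2$. Hence $c=1$ and the Jacobian ideal is simply
$$
J(L_\bb)=\bigl(u_4(\bb),\ \d_{x_1}u_4(\bb),\ \d_{x_2}u_4(\bb),\ \d_{x_3}u_4(\bb)\bigr)\subseteq\C[\x].
$$
By Proposition~\ref{prop:singular}, $\M_\bb$ is smooth if and only if $J(L_\bb)=\C[\x]$, i.e. if and only if $u_4(\bb)$ and its three $\x$-partial derivatives have no common zero in $\C^3$; so the theorem is equivalent to the statement that this common zero set is nonempty precisely when $\psi_4(\bb)=0$.

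To treat all $\bb$ simultaneously I would promote the boundary traces to indeterminates, identifying $t_i$ with $y_i$, and form
$$
\widetilde{\JJ}=\bigl(u_4,\ \d_{x_1}u_4,\ \d_{x_2}u_4,\ \d_{x_3}u_4\bigr)\subseteq\C[\x,\y],
$$
with incidence variety $Z=V(\widetilde{\JJ})\subseteq\C^3\times\C^4$ and projection $\o{pr}:\C^3\times\C^4\lto\C^4$. Then $\M_\bb$ is singular exactly when $\bb\in\o{pr}(Z)$, and $\overline{\o{pr}(Z)}=V(\widetilde{\JJ}\cap\C[\y])$. Mimicking the monomial-order matrix $W$ of Proposition~\ref{rem:singular}, I would compute a Gr\"obner basis of $\widetilde{\JJ}$ for an elimination order placing the block $\{x_1,x_2,x_3\}$ above $\{y_1,y_2,y_3,y_4\}$ (degree-lexicographic within each block), extract the principal elimination ideal $\widetilde{\JJ}\cap\C[\y]$, and identify it with $(\psi_4)$; this is the step that leans on {\em Macaulay2} and {\em Singular} \cite{GS1, DGPS}. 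The full symmetry of the situation --- the group $S_4$ permuting the four punctures, together with the fact that each trace $y_i$ enters only symmetrically in $\lambda_i$ and $\lambda_i^{-1}$, as reflected in the elementary-symmetric presentation $\Delta=s_1^4-(4s_1^2s_2+2s_1^2s_4)+(8s_1s_3+s_3^2)$ --- can be used both to shrink the computation and to confirm that the output is $\psi_4$ itself, not a divisor of it or a different power of its factors. This already gives one implication: if $\psi_4(\bb)\neq0$ then $\bb\notin\overline{\o{pr}(Z)}$, so $J(L_\bb)=\C[\x]$ and $\M_\bb$ is smooth.

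For the converse I would not rely on the elimination ideal alone, since $\o{pr}(Z)$ need not be Zariski-closed; instead I would produce the singular points of $\M_\bb$ directly, as classes of reducible representations, in the classical manner \cite{Go0}. Since $\pi\cong\F_3$ is free, a representation is determined by $\rho(F_1),\rho(F_2),\rho(F_3)$ and $\rho(F_4)=(\rho(F_1F_2F_3))^{-1}$. Writing $b_i=\lambda_i+\lambda_i^{-1}$, an upper-triangular (hence reducible) such representation with $\rho(F_i)$ of trace $b_i$ exists if and only if $\lambda_1^{\pm1}\lambda_2^{\pm1}\lambda_3^{\pm1}\lambda_4^{\pm1}=1$ for some choice of signs, which is exactly the vanishing of $\Delta(\bb)$; the resulting semisimple class is a reducible point of $\M_\bb$, and for the cubic surface $\M_\bb$ such a class is a singular point (a node, generically). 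The remaining factor $\prod_i(y_i^2-4)$ accounts for the degenerate punctures: when $b_i=\pm2$ the conjugacy class of trace $b_i$ is central or parabolic --- its closure meets $\{\pm\Id\}$ --- and $\M_\bb$ again acquires a singular point for the same reducibility reason. Combining the two parts yields $\o{pr}(Z)=V(\psi_4)$ set-theoretically, which is the theorem.

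The main obstacle is twofold. Practically, the elimination Gr\"obner basis lives in seven variables, is generated by one cubic and three quadrics, and produces a discriminant $\psi_4$ of degree $20$, so the computation is far heavier than in the $\Sigma_{1,1}$ case and may require the symmetry reductions above to terminate. Conceptually, the real subtlety is bridging $\overline{\o{pr}(Z)}$ and $\o{pr}(Z)$ --- guaranteeing that no singular point of a nearby fibre escapes to infinity, so that every point of $V(\psi_4)$ is genuinely attained --- which is precisely what forces the explicit reducible-representation construction (or, alternatively, a compactification of the family inside $\P^3\times\C^4$ making $\o{pr}$ proper on the incidence variety, followed by a separate check that the part of the singular locus coming from infinity is independent of $\bb$). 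That step is the one that goes beyond a single machine computation.
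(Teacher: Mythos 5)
Your proposal follows essentially the same route as the paper: reduce to the Jacobian ideal of the principal ideal $L_\bb$ via Proposition~\ref{prop:singular}, promote the boundary traces $\bb$ to variables, and compute a Gr\"obner basis of that ideal for an elimination order whose unique $\bb$-only element is identified with $\psi_4$. The one difference is that you add an explicit reducible-representation argument to certify the converse (that every $\bb$ with $\psi_4(\bb)=0$ genuinely yields a singular fibre, i.e.\ that no singular point escapes in the non-closed projection), a point the paper does not elaborate: it reads both implications directly off the Gr\"obner basis $J_G$, so your extra step is a legitimate (and more careful) supplement rather than a different method.
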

\begin{proof}
Let $J(L_\bb)$ be the Jacobian ideal of $L_\bb$.
For the Gr\"obner basis computation, we treat $\bb$ as variables and use monomial order
$$
W=
\left(
\begin{array}{ccccccc}
 1 & 1 & 1 & 0 & 0 & 0 & 0 \\
 1 & 0 & 0 & 0 & 0 & 0 & 0 \\
 0 & 1 & 0 & 0 & 0 & 0 & 0 \\
 0 & 0 & 0 & 1 & 1 & 1 & 1 \\
 0 & 0 & 0 & 1 & 1 & 1 & 0 \\
 0 & 0 & 0 & 1 & 1 & 0 & 0 \\
 0 & 0 & 0 & 1 & 0 & 0 & 0
\end{array}
\right)
$$
on variables
$$
\{x_3, x_2, x_1, b_4, b_3, b_2, b_1\}.
$$
Denote by $J_G$ the resulting Gr\"obner basis of $J(L_\bb)$.
The (constant) term in $J_G$ that contains only $\bb$ is $\psi_4(\bb)$.  In other words, $\psi_4(\bb) \neq 0$ if and only if $J(L_\bb) = \Os$ if and only if $\M_\bb$ is smooth by Proposition~\ref{prop:singular}.
\end{proof}

The $\SL(2,\C)$-representation variety of the 4-holed sphere is of importance.  As far as the author is aware, this is the first explicit computation of the singularity locus.  The factor $\prod_{i=1}^4 (b_i^2-4)$ corresponds to the representation varieties of the three-holed sphere.
The three-fold defined by $\Delta$ is worthy of further analysis.

\subsection{Computing $\H_{dR}^\bullet$}

\begin{thm}\label{thm:4-hole}
If $\M_\bb$ is smooth, then
$\H^2$ has dimension $h^2 = 5$ and a $\C$-basis
$$
B = \{1, x_1, x_2, x_3, x_1^2\}\otimes (x_1 dx_{23}).
$$
\end{thm}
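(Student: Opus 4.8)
The plan is to run the proof of Theorem~\ref{thm:1-hole} step for step, with the cubic $t-b$ replaced by $g:=u_4(\bb)\in\Os=\C[\x]$, the polynomial cutting out the smooth surface $\M_\bb\subseteq\C^3$. Two features distinguish $g$ from $t-b$ and must be tracked: $g$ carries a nonzero linear part, and $g$ is no longer symmetric in $x_1,x_2,x_3$. Since $L_\bb=(g)$ is principal and $\M_\bb$ is smooth of dimension $2$, the last row of Figure~\ref{fig:principal} (with $n=3$) identifies $\H^2\cong\Omega^3/d(L_\bb\Omega^2)$ via the projection $q$. Fix a degree-modified monomial order $W$ on $\Os$ (for instance degree-lexicographic on $\{x_3,x_2,x_1\}$) and the order it induces on $\Omega^3$ by Definition~\ref{def:compatible weight}. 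As before $q(dB)=\{1,2x_1,x_2,x_3,3x_1^2\}\otimes dx_{123}$, and the goal is to show this set is a $\C$-basis of $\H^2$.

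Writing $g=-x_1x_2x_3-(x_1^2+x_2^2+x_3^2)+\mu_1x_1+\mu_2x_2+\mu_3x_3+\mu_0$ with $\mu_i\in\C$ determined by $\bb$, the key observation is that the degree-$3$ part of $g$ is again $-x_1x_2x_3$ (the degree-$2$ part differs from $t-b$ only by a global sign, which is immaterial). Hence the analogue of Lemma~\ref{lem:1} goes through: for $a$ with $a_j,a_k>0$ ($\{i,j,k\}=\{1,2,3\}$) one takes $w=d\bigl(g\,\x^{a-e_j-e_k}dx_{jk}\bigr)\in d(L_\bb\Omega^2)$, whose $dx_{123}$-coefficient has $\x^a$-term a nonzero constant (the $-x_jx_k$ term of $\partial_i g$ contributes $-\x^a$ and the $-x_1x_2x_3$ term of $g$ contributes $-a_i\x^a$), all other monomials having strictly smaller degree and hence strictly smaller $W$-weight. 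Likewise the analogue of Lemma~\ref{lem:2}, for $a=a_ie_i$ with $a_i>2$, uses the very same paired $2$-forms $w_1,w_2,w_3$ as in Theorem~\ref{thm:1-hole} with $t-b$ replaced by $g$: the degree-$(a_i+1)$ cross terms still cancel because this uses only the cubic part $-x_1x_2x_3$, the surviving $\x^a$-coefficient is a nonzero constant, every competing degree-$a_i$ monomial meets the hypothesis of the first lemma, and the linear part of $g$ contributes only lower-degree terms. Because $g$ is not symmetric, one cannot say ``without loss of generality $i=1$''; instead one runs the same computation for each $i\in\{1,2,3\}$, the asymmetry affecting only the discarded lower-degree tail.

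Iterating, every $\x^adx_{123}$ reduces modulo $d(L_\bb\Omega^2)$ into $\langle 1,x_1,x_2,x_3,x_1^2,x_2^2,x_3^2\rangle\otimes dx_{123}$. To eliminate $x_2^2dx_{123}$ and $x_3^2dx_{123}$ one specialises the Lemma~\ref{lem:2}-type forms at $a_i=2$: a direct computation gives $d\bigl(g(2x_2dx_{13}+2x_1dx_{23})\bigr)=(4x_1^2-4x_2^2+2\mu_2x_2-2\mu_1x_1)dx_{123}$ and $d\bigl(g(2x_3dx_{12}+2x_2dx_{13})\bigr)=(4x_2^2-4x_3^2+2\mu_3x_3-2\mu_2x_2)dx_{123}$ (up to the orientation signs absorbed in Theorem~\ref{thm:1-hole}), so $x_2^2dx_{123}\sim x_1^2dx_{123}+\tfrac{\mu_2}{2}x_2dx_{123}-\tfrac{\mu_1}{2}x_1dx_{123}$ and $x_3^2dx_{123}$ reduces similarly into $\langle B\rangle$. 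Thus $q(dB)$ spans $\H^2$ and $h^2\le5$. For the reverse inequality I would, as in Theorem~\ref{thm:1-hole}, invoke Remark~\ref{rem:degree} to reduce linear independence of $\{1,x_1,x_2,x_3,x_1^2\}\otimes dx_{123}$ modulo $d(L_\bb\Omega^2)$ to a finite linear-algebra check on the $2$-forms $g\omega$ with $\deg\omega$ small (at most cubic), done by hand or with {\em Macaulay2}. The main obstacle I anticipate is precisely the loss of symmetry: one must carry all three coordinate directions and the three parameters $\mu_1,\mu_2,\mu_3$ through the entire reduction and verify at each stage that the low-degree corrections they generate stay inside $\langle B\rangle$ rather than leaking out; the leading-term cancellations, by contrast, are unaffected by these changes and go through verbatim, while the independence check, though finite, is the only genuinely computational step.
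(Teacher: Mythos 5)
Your proposal is correct in outline, but it takes a genuinely different route from the paper. The paper's proof is a two-step specialization: it first invokes the fact that all smooth fibres $\M_\bb$ with $\psi_4(\bb)\neq 0$ have isomorphic $\H^\bullet$ (the smooth parameter locus is connected and the relative de Rham cohomology is locally constant along it), and then computes a single convenient fibre, $\bb=(1,0,0,0)$, where the change of coordinates $\x\to-\x$ identifies $L_\bb$ with the ideal $(t(-\x)-1)$ of Section~\ref{sec:1-hole}; hence $\M_\bb\cong\M_b$ with $b=1$ and everything is read off from Theorem~\ref{thm:1-hole}. You instead rerun the entire reduction of Theorem~\ref{thm:1-hole} for the general polynomial $u_4(\bb)$, and this does work: the cubic part of $g=u_4(\bb)$ is still $-x_1x_2x_3$, so the leading-term cancellations in your analogues of Lemmas~\ref{lem:1} and~\ref{lem:2} survive (the sign flip of the quadratic part and the new linear terms only change lower-order tails, and your relations $x_2^2dx_{123}\sim x_1^2dx_{123}-\tfrac{\mu_1}{2}x_1dx_{123}+\tfrac{\mu_2}{2}x_2dx_{123}$, etc., check out). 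What your route buys is a uniform, explicit reduction valid simultaneously for every smooth $\bb$ — exactly the kind of data one would want for the Gau\ss-Manin computation of the four-holed sphere that the paper attempts and abandons for lack of computing power. What the paper's route buys is the avoidance of your one remaining delicate step: the linear-independence check is now parametric in $(\mu_0,\mu_1,\mu_2,\mu_3)$, and to get $h^2=5$ for \emph{every} smooth $\bb$ you must show that the degeneracy locus of that finite linear-algebra problem is contained in $\{\psi_4=0\}$, not merely that independence holds for generic $\bb$. The quickest way to close that gap is in effect the paper's own observation — constancy of $h^2$ over the connected smooth locus together with one explicit fibre — or else a parametric Gr\"obner computation carrying $\bb$ as variables, which you should state explicitly rather than leave as a one-fibre-style check.
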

\begin{proof}
All (smooth) $\M_\bb$ have isomorphic $\H^\bullet$ for $\psi_4(\bb) \neq 0$.
Let $\bb = (1,0,0,0)$.  Then $\psi_4(\bb) = 1\neq0$.  Hence $f_4^{-1}(\bb)$ is a smooth fibre.  

Recall the function $f_1$ and the objects $b$ and $t(\x)$ from Section~\ref{sec:1-hole}.  Let $b=1$, make a change of coordinates $\x \to -\x$ and consider the ideal $(t(-\x) - b) \subseteq \Os$.  Then a direct calculation shows
$$I_b \cong (t(-\x) - b) = L_\bb$$
as ideals of $\Os$.  
Hence $\M_\bb \cong \M_b$.  The result then follows from Theorem~\ref{thm:1-hole} (Compare \cite{GN1}).
\end{proof}
\begin{rem}
One can similarly prove that $h^0=1$ and $h^1 = 0$.
\end{rem}

\subsection{The limit of computer power}

Recall the morphism $f_4 : \M \lto \C^4$ corresponding to the ring morphism
$$
f_4^* : \C[\y] \lto \C[\x,\bt], \ \ \ y_i \mapsto t_i.
$$
$f_4$ is not smooth; however, it becomes smooth when the fibres over $\Spec(\C[\y]/(\psi_4(\y)))$ are removed.
Let $X$ and $Y$ be the respective localizations defined by
$$
\Os_X = \C[\x,\bt]_{f_4^*(\psi_4(\y))}, \ \ \ \Os_Y = \C[\y]_{\psi_4(\y)}.
$$
Then $f_4 : X \lto Y$ is smooth.  The Gau\ss-Manin connection on $\H^2$ is
$$
\nb : \mH^2 \lto \Omega_Y^1 \otimes_{f_4^*} \mH^2.
$$
Again as in the case of 1-holed torus, the fibre over $\bb \in Y$ of $\mH^2$ is isomorphic to $\H^2$ as a $\C$-vector space and generated by $B$.  In theory, one then follows the method of Section ~\ref{sec:1-hole} and factors $du$ as
$$
du = \sum_{i=1}^4 w_i \wedge dt_i
$$
for $u \in B$.
We then have
$$
\nb (u) = du = \sum_{i=1}^4 w_i \otimes_{f_4^*} dy_i
$$
for each $u \in B$ to obtain the connection matrices $E_i(\bt)$ for $1 \le i \le 4$ so that
$$
\nb = d + \sum_{i=1}^4 E_i(\bt) \otimes_{f_4^*} dy_i.
$$
Unfortunately, the computation involved in this factorization overwhelmed the computers in our possession and is likely to overwhelm any currently available computers.


\section{The representation varieties of the two-holed torus}\label{sec:2-hole}
Let $g = 1, m = 2$.
Then the fundamental group $\pi$ is again isomorphic to $\F_3$, the free group of three generators \cite{Go0}.
Change $\z$ to $\x$ as before,
$$x_i = z_i, \ \ 1 \le i \le 3; \ \ x_{ij} = z_{ij}, \ \ \ 1 \le i < j \le 3,$$
$$\x = \{x_i, \ \ 1 \le i \le 3; \ \ x_{ij}, \ \ \ 1 \le i < j \le 3\}, \ \ \ \y = \{y_1, y_2\}$$

The two punctures correspond to
$F_1 F_2 F_3$ and $F_1 F_3 F_2$, respectively.
For
$\rho \in \Hom(\F_3,\SL(2,\C))$, let $\bt = \{t_1, t_2\}$ with
$$t_1 = z_{123} = \tr(\rho(F_1 F_2 F_3)), \ \ \ t_2  = \tr(\rho(F_1 F_3 F_2))$$
i.e $\bt$ represents the holonomies at the two punctures.
Let
\begin{eqnarray*}
u_2 & = & 4-x_1^2-x_2^2-x_3^2-x_1 x_2 x_3 t_1-t_1^2+x_1 x_2 x_{12}+ \\
& & x_3 t_1 x_{12}-x_{12}^2+x_1 x_3 x_{13}+x_2 t_1 x_{13}-x_{13}^2+ \\
& &x_2 x_3 x_{23}+x_1 t_1 x_{23}-x_{12} x_{13} x_{23}-x_{23}^2\\
u_s & = & t_1 + t_2 - (x_3 x_{12} + x_2 x_{13} + x_1 x_{23} - x_1 x_2 x_3 ).
\end{eqnarray*}

Then $\M = \Spec(\C[\x,\bt]/(u_2,u_s))$ and we have a morphism
$$
f_2 : \M \lto \Spec(\C[\y])
$$
induced by the ring homomorphism
$$
f_2^* : \C[\y] \lto \C[\x, \bt]/(u_2, u_s), \ \ \  f_2^*(y_1) = t_1, \ \ f_2^*(y_2) = t_2.
$$
For a fixed element $\bb \in \C^2$, representing the (fixed) monodromies at the punctures, we rename $\M_C$ as $\M_\bb$.  Then $\M_\bb \subset \C^8$ is defined by the ideal
$$I_\bb = (t_1 - b_1, t_2 - b_2, u_2, u_s) \subseteq \C[\x,\bt].$$
For an elaborate exposition of the above calculations, see \cite{Go0}.

Let $\psi_2(\y) \in \C[\y]$ be the symmetric polynomial
$$
\psi_2(\y) =  (y_1^2-4) (y_2^2-4) (y_1 - y_2)^2.
$$
\begin{thm}
For fixed $\bb \in \C^2$, $\M_\bb$ is singular if and only if $\psi_2(\bb) = 0$.
\end{thm}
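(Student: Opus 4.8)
The plan is to reduce everything to the Jacobian criterion, Proposition~\ref{prop:singular}, exactly as in the treatments of $\Sigma_{1,1}$ and $\Sigma_{0,4}$; the one structural novelty here is that the defining ideal of $\M_\bb$ is no longer principal. Using $u_s$ to eliminate $t_2$, for a fixed $\bb = (b_1, b_2)$ the variety $\M_\bb$ is cut out inside $\C^6 = \Spec(\Os)$, where $\Os = \C[\x]$ with $\x = \{x_1, x_2, x_3, x_{12}, x_{13}, x_{23}\}$, by the codimension-two complete intersection ideal
$$
L_\bb = \bigl(\, u_2(\x, b_1),\ \ x_1 x_2 x_3 - x_3 x_{12} - x_2 x_{13} - x_1 x_{23} + b_1 + b_2 \,\bigr),
$$
where $u_2(\x, b_1)$ denotes $u_2$ with $t_1$ replaced by $b_1$. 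Then $\M_\bb$ is smooth if and only if $J(L_\bb) = \Os$, where $J(L_\bb) = L_\bb + \JJ$ and $\JJ$ is generated by the fifteen $2 \times 2$ minors of the $2 \times 6$ Jacobian $[\d_j \phi_i]$ of the two generators with respect to $\x$.

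Following the two earlier singularity proofs, I would treat $b_1, b_2$ as two further indeterminates, form $J(L_\bb) \subseteq \C[\x, b_1, b_2]$, and compute a Gr\"obner basis $J_G$ for an elimination block order in which the six $\x$-variables dominate $b_1$ and $b_2$ (with a total-degree order on the $\x$-block and, say, $b_2 > b_1$). The element of $J_G$ involving only $b_1, b_2$ generates the elimination ideal $J(L_\bb) \cap \C[b_1, b_2]$, that is, the Zariski closure of the locus in $\C^2$ over which the fibre is singular, and the claim to verify is that this element equals, up to a nonzero scalar,
$$
\psi_2(b_1, b_2) = (b_1^2 - 4)(b_2^2 - 4)(b_1 - b_2)^2 .
$$
This gives $\psi_2(\bb) \neq 0 \iff J(L_\bb) = \Os \iff \M_\bb$ is smooth. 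The automorphism $F_2 \leftrightarrow F_3$ of $\F_3$ swaps $t_1$ with $t_2$ and exchanges $x_2 \leftrightarrow x_3$, $x_{12} \leftrightarrow x_{13}$ while fixing $x_1, x_{23}$; it acts on the whole family, so the output must be symmetric in $b_1, b_2$, which is both a guide for the setup and a check on the answer.

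As an independent sanity check I would identify the irreducible factors of $\psi_2$ geometrically. The factors $b_i^2 - 4$ are the loci where the $i$-th boundary holonomy becomes $\pm$-parabolic or central; along them reducible representations force the fibre singular, exactly as $\psi_1(b) = b^2 - 4$ does for $\Sigma_{1,1}$, and together they make up the part of the locus pulled back from the "boundary" three-holed spheres of $\Sigma_{1,2}$. The remaining factor $(b_1 - b_2)^2$ is the diagonal $\tr(\rho(F_1 F_2 F_3)) = \tr(\rho(F_1 F_3 F_2))$; a local analysis at a representation where these two traces coincide should exhibit the rank drop in the Jacobian, and checking that $\psi_2$ is squarefree along $b_i^2 = 4$ but has the indicated double structure along the diagonal fixes the multiplicities and confirms that $\psi_2$ itself, rather than a proper divisor or multiple, is the defining polynomial.

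The main obstacle is computational. Unlike the $\Sigma_{1,1}$ and $\Sigma_{0,4}$ cases --- three variables and a principal ideal --- here one computes a Gr\"obner basis in eight variables for a height-two ideal generated by two cubics together with fifteen quartic minors, with two of the variables carried as parameters; this is substantially heavier and sits near the edge of what the available software can do. Keeping it tractable calls for care in choosing the block order and, in practice, for computing the basis first over a large finite field and then lifting, as well as for using the $b_1 \leftrightarrow b_2$ symmetry to halve the work. A smaller but genuine logical point is the "only if" direction: one must confirm that $J(L_\bb) \cap \C[b_1, b_2]$ equals \emph{exactly} the principal ideal $(\psi_2)$ --- equivalently, that over every point of $V(\psi_2)$, and not merely over a dense open subset, the fibre is actually singular --- which is settled by examining the finitely many irreducible components of $V(\psi_2)$ one at a time.
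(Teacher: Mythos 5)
Your proposal is correct and takes essentially the same route as the paper: apply the Jacobian criterion (Proposition~\ref{prop:singular}), treat $b_1,b_2$ as additional variables, compute a Gr\"obner basis of the Jacobian ideal with respect to an elimination block order in which the trace variables dominate $b_2,b_1$, and identify the element involving only the $b$'s with $\psi_2$; your preliminary elimination of $t_1,t_2$ via $u_s$ is exactly what the paper's choice of the eight ordered variables $\{x_{23},x_{13},x_{12},x_3,x_2,x_1,b_2,b_1\}$ amounts to. Your added checks --- the $b_1\leftrightarrow b_2$ symmetry and the verification that every point of $V(\psi_2)$ (not just a dense subset of it) carries a singular fibre --- are refinements of the ``only if'' direction that the paper asserts in one line, but they do not change the method.
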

\begin{proof}
Let $J(I_\bb)$ be the Jacobian ideal of $I_\bb$.
For the Gr\"obner basis computation, we treat $\bb$ as variables and use the monomial order
$$
W =
\left(
\begin{array}{ccccccccc}
 1 & 1 & 1 & 1 & 1 & 1 & 0 & 0 \\
 1 & 0 & 0 & 0 & 0 & 0 & 0 & 0 \\
 0 & 1 & 0 & 0 & 0 & 0 & 0 & 0 \\
 0 & 0 & 1 & 0 & 0 & 0 & 0 & 0 \\
 0 & 0 & 0 & 1 & 0 & 0 & 0 & 0 \\
 0 & 0 & 0 & 0 & 1 & 0 & 0 & 0 \\
 0 & 0 & 0 & 0 & 0 & 0 & 1 & 1 \\
 0 & 0 & 0 & 0 & 0 & 0 & 1 & 0
\end{array}
\right).
$$
on variables
$$
\{x_{23}, x_{13}, x_{12},x_3, x_2, x_1, b_2, b_1\}.
$$
Denote by $J_G$ the resulting Gr\"obner basis of $J(I_\bb)$.
The (constant) term in $J_G$ that contains only $\bb$ is $\psi_2(\bb)$.  In other words, $\psi_2(\bb) \neq 0$ if and only if $J(I_\bb) = \Os$ if and only if $\M_\bb$ is smooth by Proposition~\ref{prop:singular}.
\end{proof}
In this case, there is no mystery of the singular locus.  It contains the representation varieties of the on-holed torus.


\begin{thebibliography}{99}
\bibitem{Br1} Brieskorn, Egbert; Die Monodromie der isolierten Singularit\"aten von Hyperfl\"achen. (German) {\em Manuscripta Math.} {\bf 2} 1970 103-161.

\bibitem{Br2} Broughton, S. A.; Milnor numbers and the topology of polynomial hypersurfaces. {\em Invent. Math.} {\bf 92} (1988), no. 2, 217-241.

\bibitem{Br3} Broughton, S. A.; On the topology of polynomial hypersurfaces. {\em Singularities, Part 1 (Arcata, Calif., 1981)}, 167-178, Proc. Sympos. Pure Math., 40, {\em Amer. Math. Soc., Providence, RI}, 1983.

\bibitem{CLO1} Cox, David; Little, John; O'Shea, Donal; Ideals, varieties, and algorithms. An introduction to computational algebraic geometry and commutative algebra. Third edition. Undergraduate Texts in Mathematics. {\em Springer, New York}, 2007. xvi+551 pp.

\bibitem{DGPS}
Decker, W.; Greuel, G.-M.; Pfister, G.; Sch{\"o}nemann, H.:
\newblock {\sc Singular} {4-0-2} --- {A} computer algebra system for polynomial computations.
\newblock {http://www.singular.uni-kl.de} (2015).

\bibitem{De1}
Deligne, Pierre; \'Equations diff\'erentielles \'a points singuliers r\'eguliers. (French) Lecture Notes in Mathematics, Vol. 163. {\em Springer-Verlag, Berlin-New York}, 1970. iii+133 pp.

\bibitem{DK1}
Groupes de monodromie en g\'eom\'etrie algŽbrique. II. (French) S\'eminaire de G\'eom\'etrie Alg\'ebrique du Bois-Marie 1967-1969 (SGA 7 II). Dirig\'e par P. Deligne et N. Katz. Lecture Notes in Mathematics, Vol. 340. {\em Springer-Verlag, Berlin-New York}, 1973. x+438 pp.

\bibitem{Ei1} Eisenbud, David; Commutative algebra. With a view toward algebraic geometry. Graduate Texts in Mathematics, {\bf 150}. {\em Springer-Verlag, New York}, 1995. xvi+785 pp.

\bibitem{Go0} Goldman, William M.; Trace coordinates on Fricke spaces of some simple hyperbolic surfaces. {\em Handbook of Teichm\"uller theory. Vol. II}, 611-684, IRMA Lect. Math. Theor. Phys., {\bf 13}, Eur. Math. Soc., ZŸrich, 2009.


\bibitem{GN1} Goldman, Wiliam M.; Neumann, Walter D. Homological action of the modular group on some cubic moduli spaces. {\em Math. Res. Lett.} {\bf 12} (2005), no. 4, 575-591.

\bibitem{GS1} D. Grayson and M. Stillman. Macaulay 2, a software system for research in algebraic geometry.  Available at http://www.math.uiuc.edu/Macaulay2/.

\bibitem{Gr0} Grothendieck, A.; On the de Rham cohomology of algebraic varieties. {\em Inst. Hautes \'{E}tudes Sci. Publ. Math.} No. 29 1966 95 - 103.

\bibitem{Ha1} Hartshorne, Robin; On the De Rham cohomology of algebraic varieties. {\em Inst. Hautes \'Etudes Sci. Publ. Math.} No. 45 (1975), 5-99.

\bibitem{Ka1} Katz, Nicholas M.; Rigid local systems. Annals of Mathematics Studies, {\bf 139}. Princeton University Press, {\em Princeton, NJ,} 1996. viii+223 pp.

\bibitem{KO1} Katz, Nicholas M.; Oda, Tadao On the differentiation of de Rham cohomology classes with respect to parameters. {\em J. Math. Kyoto Univ.} {\bf 8} 1968 199-213.

\bibitem{Ma1}  Malgrange, Bernard; Sur les points singuliers des \'equations diff\'erentielles. (French) {\em Enseignement Math. (2)} {\bf 20} (1974), 147-176.

\bibitem{Mi1} Milnor, John; Singular points of complex hypersurfaces. Annals of Mathematics Studies, No. 61 {\em Princeton University Press, Princeton, N.J.; University of Tokyo Press, Tokyo} 1968 iii+122 pp.

\bibitem{NN1} Neumann, Walter D.; Norbury, Paul Vanishing cycles and monodromy of complex polynomials. {\em Duke Math. J.} {\bf 101} (2000), no. 3, 487-497.

\bibitem{OT1} Oaku, Toshinori; Takayama, Nobuki An algorithm for de Rham cohomology groups of the complement of an affine variety via D-module computation. Effective methods in algebraic geometry (Saint-Malo, 1998). J. Pure Appl. Algebra {\bf 139} (1999), no. 1-3, 201-233.

\bibitem{Sc1} Scheiblechner, Peter; Effective de Rham cohomology-the hypersurface case. ISSAC 2012-Proceedings of the 37th International Symposium on Symbolic and Algebraic Computation, 305-310, ACM, New York, 2012.

\bibitem{Sc2} Scheiblechner, Peter; Castelnuovo-Mumford regularity and computing the de Rham cohomology of smooth projective varieties. Found. Comput. Math. {\bf 12} (2012), no. 5, 541-571.

\bibitem{Sch3} Schulze, Mathias; Algorithms for the Gauss-Manin connection. {\em J. Symbolic Comput.} {\bf 32} (2001), no. 5, 549-564.

\bibitem{Wa1} Walther, Uli; Algorithmic determination of the rational cohomology of complex varieties via differential forms. Symbolic computation: solving equations in algebra, geometry, and engineering (South Hadley, MA, 2000), 185-206, Contemp. Math., 286, Amer. Math. Soc., Providence, RI, 2001.

\bibitem{Wr1} Wolfram Research, Inc., Mathematica, Version 7.0, Champaign, IL (2008).




\end{thebibliography}
\end{document}